\newcounter{num}[section] %
\newenvironment{theo}
{\refstepcounter{num}%
\bigskip\noindent{\bf Theorem~\arabic{section}.\arabic{num}. }\it}
{\smallskip}
\newenvironment{cor}
{\refstepcounter{num}%
\bigskip\noindent{\bf Corollary~\arabic{section}.\arabic{num}. }\it}
\newenvironment{lemma}
{\refstepcounter{num}%
\bigskip\noindent{\bf Lemma~\arabic{section}.\arabic{num}. }\it}
\newenvironment{conj}
{\refstepcounter{num}%
\bigskip\noindent{\bf Conjecture~\arabic{section}.\arabic{num}. }\it}
\newenvironment{eq}{\begin{equation}}{\end{equation}}
\renewcommand{\Ref}[1]{(\ref{#1})}
\newcommand{\si}{\sigma}
\newcommand{\al}{\alpha}
\newcommand{\be}{\beta}
\newcommand{\ga}{\gamma}
\newcommand{\la}{\lambda}
\newcommand{\de}{\delta}
\newcommand{\ov}[1]{\overline{#1}}
\newcommand{\un}[1]{{\underline{#1}} }
\newcommand{\tr}{\mathop{\rm tr}}
\newcommand{\mdeg}{\mathop{\rm mdeg}}
\newcommand{\Char}{\mathop{\rm char}}
\newcommand{\Ker}{{\mathop{\rm{Ker }}}}
\newcommand{\Sym}{{\mathcal S}}
\newcommand{\wz}{\mathop{\rm wz}}
\newcommand{\upto}{,\ldots ,}
\newcommand{\GL}{{\rm GL}}         
\newcommand{\SL}{{\rm SL}}         
\newcommand{\N}{\mathcal{N}} 
\newcommand{\FF}{{\mathbb{F}}}   
\newcommand{\KK}{{\mathbb{K}}}   
\newcommand{\CC}{{\mathbb{C}}}   
\newcommand{\NN}{{\mathbb{N}}}
\renewcommand{\k}{\kappa}
\newcommand{\matr}[4]{\left(
\begin{array}{cc}
#1 & #2 \\ 
#3 & #4 \\ 
\end{array}
\right)}
\newcommand{\besep}{\be_{\rm sep}}
\newcommand{\sisep}{\si_{\rm sep}}
\newcommand{\mylabel}[1]{}
\begin{document}
\title[On $m$-tuples of nilpotent $2\times2$ matrices over an arbitrary field]{On $m$-tuples of nilpotent $2\times2$ matrices over an arbitrary field}

\thanks{The work was supported by FAPESP 2018/23690-6}

\author{Artem Lopatin}
\address{Artem Lopatin\\ 
Universidade Estadual de Campinas (UNICAMP), 651 Sergio Buarque de Holanda, 13083-859 Campinas, SP, Brazil}
\email{dr.artem.lopatin@gmail.com (Artem Lopatin)}


\begin{abstract} The algebra of $\GL_n$-invariants of $m$-tuples of $n\times n$ matrices with respect to the action by simultaneous conjugation is a classical topic in case of infinite base field. On the other hand, in case of a finite field generators of polynomial invariants even  in case of a pair of $2\times 2$ matrices are not known.   Working over an arbitrary field we classified all $\GL_2$-orbits on $m$-tuples of $2\times 2$ nilpotent matrices for all $m>0$. As a consequence, we obtained a minimal separating set for the algebra of $\GL_2$-invariant polynomial functions of $m$-tuples of $2\times 2$ nilpotent matrices. We also described the least possible number of elements of a separating set for an algebra of invariant polynomial functions over a finite field.  
 
\noindent{\bf Keywords: } invariant theory, matrix invariants, general linear group,  separating invariants, generators, nilpotent matrices, orbits, finite field.

\noindent{\bf 2020 MSC: } 16R30; 13A50.
\end{abstract}

\maketitle

\section{Introduction}\label{section_intro}

\subsection{Algebras of invariants}\label{section_inv} All vector spaces, algebras, and modules are over an arbitrary (possibly finite) field $\FF$ of arbitrary characteristic $p\geq0$ unless otherwise stated.  By an algebra we always mean an associative algebra with unity. 

Consider an $n$-dimensional vector space $V$ over the field $\FF$ with a fixed basis $v_1,\ldots,v_n$.  The coordinate ring $\FF[V]  = \FF[ x_1 \upto x_n]$ of $V$ is isomorphic to the symmetric algebra $S(V^{\ast})$ over the dual space $V^{\ast}$, where $x_1 \upto x_n$ is the basis for $V^\ast$.  Let $G$ be a subgroup of $\GL(V) \cong \GL_n(\FF)$. The algebra $\FF[V]$ becomes a $G$-module with
\begin{eq}\label{eq_action}
(g \cdot f)(v) = f(g^{-1}\cdot v) 
\end{eq}%
for all $f \in V^\ast$ and $v \in V$. The algebra of {\it polynomial invariants} is defined as follows:
$$ \FF[V]^G = \{ f \in \FF[V] \mid g\cdot f=f  \, \text{ for all  } g \in G \}.  
$$

The algebra of polynomial functions $\mathcal{O}(V) = \FF[\ov{x}_1,\ldots,\ov{x}_n]$, where $\ov{x}_i:v \mapsto v_i$ for $v=(v_1,\ldots,v_n)\in V$ with respect to the fixed basis of $V$.  The algebra ${\mathcal{O}}(V)$ becomes a $G$-module by formula~\Ref{eq_action}
for all $f \in {\mathcal{O}}(V)$ and $v \in V$. The algebra of {\it invariant polynomial functions} is defined as follows:
\begin{eqnarray*} {\mathcal{O}}(V)^G &=& \{ f \in {\mathcal{O}}(V) \mid g\cdot f=f  \, \text{ for all  } g \in G \} \\ & =& \{ f \in {\mathcal{O}}(V) \mid f(g\cdot v)=f(v) \, \text{ for all } g\in G, v \in V \}. \end{eqnarray*}
Obviously, $\mathcal{O}(V)\simeq \FF[V] / \Ker(\Psi)$ for the homomorphism of algebras $\Psi: \FF[V]\to \mathcal{O}(V)$ defined by $x_i\to \ov{x}_i$. In case $\FF$ is infinite we have that $\Ker(\Psi)=0$ and $\FF[V]^G = \mathcal{O}(V)^G$. If $\FF=\FF_q$ is a finite field, then the ideal $\Ker(\Psi)$ is generated by $x_i^q - x_i$ for all $1 \leq i \leq n$. 
Using $\Psi$ we can consider any element $f\in\FF[V]$ as the map $V\to \FF$. Namely, we write $f(v)$ for $\Psi(f)(v)$, where $v\in V$.  For any infinite extension  $\FF\subset \KK$  we denote ${V}_{\KK} = V \otimes_{\FF} \KK$. Therefore, 
\begin{eqnarray*} \FF[V]^G &= & \{ f \in \FF[V] \mid f(g\cdot v)=f(v) \, \text{ for all } g\in G, v \in {V}_{\KK} \}  \\
& \subset & \{ f \in \FF[V] \mid f(g\cdot v)=f(v) \, \text{ for all } g\in G, v \in V \} \end{eqnarray*}

Assume that $W\subset V$ is a $G$-invariant subset of $V$. The algebra of polynomial functions $\mathcal{O}(W)$ on $W$ is generated by functions $y_1,\ldots,y_n:W\to \FF$, where $y_i$ is the restriction of $\ov{x}_i:V\to \FF$ to $W$. The algebra ${\mathcal{O}}(W)$ becomes a $G$-module by formula~\Ref{eq_action}
for all $f \in \mathcal{O}(W)$ and $v \in W$.  Denote by $I(W)$ the ideal of all $f\in  \mathcal{O}(W)$ that are zeros over $W$. Then we have $\mathcal{O}(W)\simeq \mathcal{O}(V) / I(W)$ and $y_i=\ov{x}_i + I(W)$. The algebra ${\mathcal{O}}(W)^G$ of {\it invariant polynomial functions on $W$}  is defined in the same way as  ${\mathcal{O}}(V)^G$. 
The {\it degree} of $f\in \mathcal{O}(W)$ is defined as the minimal degree of a polynomial $h\in\FF[V]$  with $f=\Psi(h)+I(W)$.

\subsection{Separating invariants}\label{section_separ}
In 2002 Derksen and Kemper~\cite{DerksenKemper_book} (see~\cite{DerksenKemper_bookII} for the second edition) introduced the notion of separating invariants as a weaker concept than generating invariants.   Given a subset $S$ of $\mathcal{O}(W)^{G}$, we say that elements $u,v$ of $W$ {\it are separated by $S$} if  exists an invariant $f\in S$ with $f(u)\neq f(v)$. If  $u,v\in W$ are separated by $\mathcal{O}(W)^{G}$, then we simply say that they {\it are separated}. A subset $S\subset \mathcal{O}(W)^{G}$ of the invariant ring is called {\it separating} if for any $v, w$ from $W$ that are separated we have that they are separated by $S$. A separating subset for $\FF[V]^{G}$ is defined in the same manner. We say that a separating set is minimal if it is minimal w.r.t.~inclusion. Obviously, any generating set is also separating. Denote by $\besep(\mathcal{O}(W)^G)$ the minimal integer $\besep$ such that the set of all invariant polynomial functions of degree  less or equal to $\besep$ is separating for $\mathcal{O}(W)^G$. Minimal separating sets for different actions were constructed in~\cite{Cavalcante_Lopatin_1, Domokos2020, Domokos2020Add, Lopatin_separating2x2, Kemper_Lopatin_Reimers_2, Lopatin_Reimers_1, Reimers20}.

Separating invariants for $\FF[V]^G$ in case of a finite field $\FF = \FF_q$ with $q$ elements were studied by Kemper, Lopatin, Reimers in~\cite{Kemper_Lopatin_Reimers_2}. Namely, it was shown that the minimal number of separating invariants for $\FF[V]^G$ is $\ga(q,\k) =\lceil\log_q(\k)\rceil$, where $\k$ stands for the number of $G$-orbits in $V$, and an explicit construction of a minimal separating set of $\ga(q,\k)$ invariants of degree at most $|G|n(q-1)$ was given. Moreover,  a minimal separating set of $\ga(q,\k)$ invariants of degree $\leq n(q-1)$ in the non-modular case was constructed  as well as in the case when $G$ consists entirely of monomial matrices.

\subsection{Matrix invariants}\label{section_matrix}
For $n>1$ and $m\geq 1$, the direct sum $M_n^m$ of $m$ copies of the space of $n\times n$ matrices over $\FF$ is a $\GL_n$-module with respect to the diagonal action by conjugation: $g\cdot\un{A} = (g A_1 g^{-1},\ldots,g A_d g^{-1})$ for $g\in \GL_n$ and $\un{A}=(A_1,\ldots,A_d)$ from $M_n^m$.  Given a matrix $A$, denote by $A_{ij}$ the $(i,j)^{\rm th}$ entry of $A$. Any element of the coordinate ring 
$$
\FF[M_n^m]^{\GL_n}= \FF[x_{ij}(k) \;| \;1\leq i,j\leq n, 1\leq k\leq m]
$$
of $M_n^m$ can be considered as the polynomial function $x_{ij}(k): M_n^m\to \FF$, which sends $(A_1,\ldots,A_m)$ to  $(A_k)_{ij}$.  The algebra of invariant polynomial functions $\mathcal{O}(M_n^m)^{\GL_n}$ is defined as  in Section~\ref{section_inv}.

Sibirskii~\cite{Sibirskii_1968}, Procesi~\cite{Procesi76} and Donkin~\cite{Donkin92a} established that the algebra of invariant polynomial functions $\mathcal{O}(M_n^m)^{\GL_n}$ is generated by $\si_t(X_{k_1}\cdots X_{k_r})$ in case $\FF$ is infinite, where $X_k$ stands for the $n\times n$ {\it generic} matrix $(x_{ij}(k))_{1\leq i,j\leq n}$ and $\si_t$ stands for the $t^{\rm th}$ coefficient of the characteristic polynomial, i.e., $\det(\la E - A)=\sum_{t=0}^n (-1)^{t}\la^{n-t} \si_t(A)$ for any $n\times n$ matrix $A$ over a commutative ring. In particular, $\si_0(A)=1$, $\si_1(A)=\tr(A)$ and $\si_n(A)=\det(A)$. 

For a monomial $c\in \FF[M_n^m]$ denote by $\deg{c}\in \NN$ its {\it degree} and by $\mdeg{c}\in \NN^m$ its {\it multidegree}, where $\NN$ stands for the set of non-negative integers. Namely, $\mdeg{c}=(t_1,\ldots,t_d)$, where $t_k$ is the total degree of the monomial $c$ in $x_{ij}(k)$, $1\leq i,j\leq n$, and $\deg{c}=t_1+\cdots+t_m$.   

%

 
In case of an infinite field of arbitrary characteristic the following minimal separating set for $\mathcal{O}(M_2^m)^{\GL_2}$ was given by Kaygorodov, Lopatin, Popov~\cite{Lopatin_separating2x2}:
\begin{eq}\label{eq0}
\tr(X_k^2),\, 1\leq k\leq m;\; 
\tr(X_{k_1}\cdots X_{k_r}),\, r\in\{1,2,3\},\, 1\leq k_1<\cdots<k_r\leq m.
\end{eq}%
Note that set~\Ref{eq0} generates the algebra $\mathcal{O}(M_2^m)^{\GL_2}$ if and only if the characteristic of $\FF$ is different from two or $m\leq 3$ (see~\cite{Procesi_1984,DKZ_2002}). A minimal generating set for $\mathcal{O}(M_3^m)^{\GL_3}$ was given by Lopatin in~\cite{Lopatin_Sib, Lopatin_Comm1,Lopatin_Comm2} in the case of an arbitrary infinite field. Over a field of characteristic zero a minimal generating set for $\mathcal{O}(M_n^2)^{\GL_n}$ was established by Drensky and Sadikova~\cite{Drensky_Sadikova_4x4} in case $n=4$ (see also~\cite{Teranishi_1986}) and by \DJ{}okovi\'c~\cite{Djokovic2007} in case $n=5$ (see also~\cite{Djokovic2009}). Some upper bounds on degrees of generating and separating invarints for $\mathcal{O}(M_n^m)^{\GL_n}$ were given by  Derksen and Makam in~\cite{DerksenMakam4, DerksenMakam5}. A minimal separating set for the algebra of matrix semi-invariants $\mathcal{O}(M_2^m)^{\SL_2\times \SL_2}$ was explicitly described by Domokos~\cite{Domokos2020} over an arbitrary algebraically closed field. Note that a minimal generating set for $\mathcal{O}(M_2^m)^{\SL_2\times \SL_2}$ was given by Lopatin~\cite{Lopatin_semi2222} over an arbitrary infinite field (see also~\cite{Domokos2020Add}). Elmer~\cite{Elmer_2023, Elmer_semi_2023} obtained low bounds on the least possible number of elements for separating sets for $\mathcal{O}(M_2^m)^{\GL_2}$ and $\mathcal{O}(M_2^m)^{\SL_2\times \SL_2}$ in case $\FF=\CC$.  More results on separating invariants of matrices can be found in~\cite{Lopatin_Ferreira}.

\subsection{Nilpotent matrices}\label{section_nilp}

Denote by $\N_n$ the set of all $n\times n$ {\it nilpotent} matrices, i.e., $A\in M_n$ belongs to $\N_n$ if and only if $\si_1(A)=\si_2(A)=\cdots=\si_n(A)=0$, or equivalently, $A^n=0$. 

Over a finite field, the number of $\GL_n$-orbits on $\N_n^m$ was studied by Hua~\cite{Hua_2021}. The variety of pairs of commuting nilpotent matrices has extensively been studied over the past forty years (see~\cite{Baranovsky_2001, Basili_2003, Basili_Iarrobino_2008, Bondarenko_Futorny_Petravchuk_Sergeichuk_2021, Hua_2023, Hua_4x4_2023} for more details).  

The algebra of polynomial functions $\mathcal{O}(\N_n^m)$ of $\N_n^m\subset M_n^m$  is generated by $y_{ij}(k)$ for $1\leq i,j\leq n$ and $1\leq k\leq m$, where  $y_{ij}(k):\N_n^m\to \FF$ sends $(A_1,\ldots,A_m)$ to $(A_k)_{ij}$. As in Section~\ref{section_inv} we have  $\mathcal{O}(\N_n^m)=\mathcal{O}(M_n^m)/ I(\N_n^m)$. The algebra $\mathcal{O}(\N_n^m)^{\GL_n}$ of {\it invariant polynomial functions on nilpotent matrices} is defined in the same way as $\mathcal{O}(M_n^m)^{\GL_n}$.
Note that $\si_t(Y_{k_1}\cdots Y_{k_r})$ lies in  $\mathcal{O}(\N_n^m)^{\GL_n}$, where $Y_k=(y_{ij}(k))_{1\leq i,j\leq n}$ stands for the {\it generic nilpotent} $n\times n$ matrix.   Obviously, $\si_t(Y_k^s)=0$ for all $1\leq t\leq n$, $s>0$ and $Y_k^n=0$. It is easy to see that in case of algebraically closed field of zero characteristic the algebra $\mathcal{O}(\N_n^m)^{\GL_n}$ is generated by $\tr(Y_{i_1}\cdots Y_{i_r})$ (for example, see~\cite{Cavalcante_Lopatin_1}), but in general case generators for $\mathcal{O}(\N_n^m)^{\GL_n}$ are not known for $n,m\geq2$. Moreover, over a finite field $\FF$ generators for algebras $\mathcal{O}(M_n^m)^{\GL_n}$ and $\FF[M_n^m]^{\GL_n}$ are not known as well. 

Working over an algebraically closed field of zero characteristic Cavalcante and Lopatin in~\cite{Cavalcante_Lopatin_1} showed that the set
$$S_{2,m}=\{\tr(Y_i Y_j), \; 1\leq i<j\leq m;\;\; \tr(Y_i Y_j Y_k), \; 1\leq i<j<k\leq m\}$$
is a minimal generating set and a minimal separating set for the algebra $\mathcal{O}(\N_2^m)^{\GL_2}$ for $m>0$. 
Minimal generating sets and  minimal separating sets for the algebras $\mathcal{O}(\N_3^2)^{\GL_3}$ and  $\mathcal{O}(\N_3^3)^{\GL_3}$ were also obtained  in~\cite{Cavalcante_Lopatin_1} in case $p=0$.


\subsection{Results}\label{section_results}

For $\FF=\FF_q$ in Section~\ref{section_minimal} we extend the results from~\cite{Kemper_Lopatin_Reimers_2} given in Section~\ref{section_separ} to the algebra $\mathcal{O}(W)^G$ of $G$-invariant polynomial functions on a $G$-invariant subset $W\subset V$ (see Theorem~\ref{theo1}).  Note that a separating set for $\mathcal{O}(W)^{G}$ separates all $G$-orbits on $W$ in case $\FF$ is finite (for example, it follows from the proof of Theorem~\ref{theo1}).

In Section~\ref{section_main1} working over an arbitrary field, we explicitly describe a minimal set of representatives of all $\GL_2$-orbits on $\N_2^m$ in Theorem~\ref{theo_orb}. As a consequence, in case of $\FF=\FF_q$ we explicitly calculate the number of orbits in Corollary~\ref{cor_cor} as well as the least possible number of elements for a separating set for  $\mathcal{O}(\N_2^2)^{\GL_2}$. We formulate Conjecture~\ref{conj} about the number of $\GL_n$-orbits on $\N_n^m$. 

To formulate results about separating sets, introduce some notations. For an arbitrary field $\FF$ denote
$$S_{2,m}^{(2)}=\{\tr(Y_i Y_j), \; 1\leq i<j\leq m\}.$$

Now assume that $\FF$ is finite. For all $1\leq i,j\leq n$ and $\al\in\FF^{\times}$ define $\zeta(Y_i)$ and $\eta_{\al}(Y_i,Y_j)$ from $\mathcal{O}(\N_2^m)^{\GL_2}$ as follows:
\begin{eq}\label{eq_zeta}
\zeta(A)=\left\{
\begin{array}{rl}
0, & \text{if } A\neq0 \\
1, & \text{if } A=0 \\
\end{array}
\right.,
\end{eq}
\begin{eq}\label{eq_eta}
\eta_{\al}(A,B)=\left\{
\begin{array}{rl}
0, & \text{if } \al A\neq B \\
1, & \text{if } \al A=B \\
\end{array}
\right.,
\end{eq}%
where $A,B\in\N_2$.   The presentation of $\zeta(Y_i)$ and $\eta_{\al}(Y_i,Y_j)$ as polynomials in $\{y_{ij}(k)\}$ is explicitly given in Section~\ref{section_main}. In Theorem~\ref{theo_main_finite} we establish that the set
$$H_{2,m}^{(2)}=S_{2,m}^{(2)}\sqcup \{\zeta(Y_i), \; 1\leq i\leq m; \;\;\; \eta_{\al}(Y_i,Y_j), \; 1\leq i<j\leq m,\; \al\in\FF\,\backslash\{0,1\}\}$$
is a minimal separating set for  $\mathcal{O}(\N_2^m)^{\GL_2}$ in case $\Char{\FF}=2$ and the set
$$H_{2,m}=S_{2,m}\sqcup \{\zeta(Y_i), \; 1\leq i\leq m; \;\;\; \eta_{\al}(Y_i,Y_j), \; 1\leq i<j\leq m,\; \al\in\FF\,\backslash\{0,1\}\}$$
is a minimal separating set for  $\mathcal{O}(\N_2^m)^{\GL_2}$  in case $\Char{\FF}>2$. 

In Theorem~\ref{theo_main_infinite} we describe a minimal separating set in case of an infinite field. Note that over an infinite field the functions $\zeta(Y_i)$ and $\eta_{\al}(Y_i,Y_j)$ do not belong to $\mathcal{O}(\N_2^m)$. Some corollaries are given in Section~\ref{section_cor}.

\subsection{Notations}\label{section_notations}

If for $\un{A},\un{B}\in M_n^m$ there exists $g\in \GL_n$ such that $g\cdot \un{A}=\un{B}$, then we write $\un{A}\sim \un{B}$ and say that $\un{A}$, $\un{B}$ are similar. We say that $\un{A}$ has no zeros if $A_i$  is non-zero for all $i$.   Denote by $\wz(\un{A})$ the result of elimination of all zero matrices from $\un{A}$. As an example, for $\un{A}=(0,B,0, C,D)$ with non-zero $B,C,D\in M_n$, we have $\wz(\un{A})=(B,C,D)$. For a permutation $\si\in \Sym_m$, we write $\un{A}_{\si}$ for $(A_{\si(1)},\ldots,A_{\si(m)})$.

Denote by $E$ the identity matrix and by $E_{ij}$ the matrix such that the $(i,j)^{\rm th}$ entry is equal to one and the rest of entries are zeros.   Denote by $\FF^{\times}$ the set of all non-zero elements of $\FF$.  For short, we write $\un{0}$ for $(0,\ldots,0)\in M_n^m$.  Given $A\in M_n$ and $r\geq 0$, we write $A^{(r)}$ for $(\underbrace{A,\ldots,A}_r)\in M_n^r$.


\section{Minimal separating invariants for $\mathcal{O}(W)^G$ over a finite field}\label{section_minimal}

In this section we assume that $\FF = \FF_q$,  $W\subset V$ is a $G$-invariant subset of $V$, and $\dim V=n$. Denote $\ga=\ga(q,\k) =\lceil\log_q(\k)\rceil$,  
where $\k$ stands for the number of $G$-orbits on $W$. For any vector $w=(w_1,\ldots,w_n)\in W$ consider the following polynomial function from $\mathcal{O}(W)$: 
$$f_w = (-1)^n \prod\limits_{\al\in\FF \setminus \{w_1 \}}(y_1 - \al) \;\;\cdots \prod\limits_{\al\in\FF\setminus \{w_n\}}(y_n - \al).$$
Note that $f_w(w)=(-1)^n \left(\prod\limits_{\al\in\FF^{\ast}} \al \right)^{\!\!n} =  1$ and for each $v\in W$ we have 
$$f_w(v)=\left\{
\begin{array}{cl}
1,& v=w \\
0,& \text{otherwise} \\
\end{array}
\right..$$
We present $W$ as a union of $G$-orbits: $W=U_1 \sqcup \cdots \sqcup U_{\k}$. For every $1\leq j\leq \k$ define
\begin{equation}\label{eq_fj}
f_j=\sum\limits_{u\in U_j} f_u.
\end{equation}
Then for each $v\in W$ we have
\begin{equation}\label{eq_f}
f_j(v)=\left\{
\begin{array}{cl}
1,& v\in U_j \\
0,& \text{otherwise} \\
\end{array}
\right..
\end{equation}
Therefore, $f_1,\ldots,f_{\k}$ belong to $\mathcal{O}(W)^G$ and they form a separating set for $\mathcal{O}(W)^G$. By the definition of $\ga$ we have that $q^{\ga-1}<\k\leq q^{\ga}$. Thus there exist~$\k$ pairwise different vectors in $\FF^\ga$, which we may put together in a matrix $(\al_{ij})$ over $\FF$ of size $(\ga \times \k)$.

\begin{theo}\label{theo1} 
\begin{enumerate}
\item[1.] Every separating set for $\mathcal{O}(W)^G$ contains at least $\ga(q,\k)$ elements. 

\item[2.] Let $(\al_{ij}) \in \FF^{\gamma \times \k}$ be a matrix whose columns are pairwise different and define $$h_i =  \al_{i1} f_1 + \cdots + \al_{i\k} f_{\k} \quad (1\leq i\leq \ga).$$
Then $\{h_1 \upto h_{\ga}\}$ is a minimal separating set for $\mathcal{O}(W)^G$ consisting of (non-homogeneous) elements of degree less or equal to $n(q-1)$.
\end{enumerate}
\end{theo}

\begin{proof} Consider some representatives of $G$-orbits on $W$: $u_1\in U_1 \upto u_{\k}\in U_{\k}$.

\medskip
\noindent{\bf 1.} Assume that $\{l_1 \upto l_r\}$ is a separating set for $\mathcal{O}(W)^G$. Then the column vectors \[ \begin{pmatrix}l_1(u_j)\\ \vdots \\ l_r(u_j) \end{pmatrix} \quad \text{ with } 1 \leq j \leq \k \] are pairwise different. Hence $\k \leq q^r$ and $\gamma \leq r$ follows from the definition of $\ga$.

\medskip
\noindent{\bf 2.} Applying formula~(\ref{eq_f}) we obtain that 
$$\begin{pmatrix}h_1(u_j) \\ \vdots \\ h_{\ga}(u_j) \end{pmatrix} = \begin{pmatrix}\al_{1j}\\ \vdots \\ \al_{\ga j} \end{pmatrix} \quad \text{ for all } 1\leq j\leq \k.$$ 
By the conditions of the theorem, these column vectors are pairwise different, hence $\{h_1,\ldots,h_{\ga}\}$ is a separating set. Minimality follows from part 1.
\end{proof}

\section{Classification of $\GL_2$-orbits on $\N_2^m$}\label{section_main1}

In this section we assume that $\FF$ is an arbitrary field. 

\begin{lemma}\label{lemma_1}
If $\un{A}\in\N_2^2$ has no zeros, then $\un{A}\sim (E_{12},\al E_{12})$ or $\un{A}\sim (E_{12}, \al E_{21})$ for some non-zero $\al\in\FF$.
\end{lemma}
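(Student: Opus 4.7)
The plan is to exploit the classical normal form for a single nonzero nilpotent $2\times 2$ matrix, then use the residual stabilizer to reduce the second matrix.

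\medskip

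\noindent\textbf{Step 1: Normalize $A_1$.} Any nonzero nilpotent $2\times 2$ matrix has rank one and minimal polynomial $t^2$, so it is a single Jordan block of size two. Hence there exists $g\in\GL_2$ with $gA_1 g^{-1}=E_{12}$. Replacing $\un{A}$ by $g\cdot\un{A}$, I may assume $A_1=E_{12}$.

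\medskip

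\noindent\textbf{Step 2: Identify the stabilizer of $E_{12}$.} A direct computation shows that $g\in\GL_2$ satisfies $gE_{12}=E_{12}g$ iff $g=\begin{pmatrix}a & b\\ 0 & a\end{pmatrix}$ with $a\in\FF^{\times}$, $b\in\FF$. Thus the remaining freedom available to act on $A_2$ is exactly this two-parameter subgroup.

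\medskip

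\noindent\textbf{Step 3: Act on $A_2$.} Since $A_2\in\N_2$ is nonzero, it has zero trace and zero determinant, so $A_2=\begin{pmatrix}x & y\\ z & -x\end{pmatrix}$ with $x^2+yz=0$ and $(x,y,z)\neq(0,0,0)$. Conjugating by the stabilizer element $g=\bigl(\begin{smallmatrix}a & b\\ 0 & a\end{smallmatrix}\bigr)$, a short calculation gives
\[
gA_2g^{-1}=\begin{pmatrix} x+bz/a & y-2bx/a-b^2z/a^2\\ z & -x-bz/a\end{pmatrix}.
\]
Note the crucial observation that the $(2,1)$-entry is invariant under the stabilizer. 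Two cases arise:

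\smallskip

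If $z=0$, then from $x^2+yz=0$ we get $x=0$, so $A_2=yE_{12}$ already, with $\al:=y\in\FF^{\times}$, giving the first normal form.

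\smallskip

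If $z\neq 0$, choose $a=1$ and $b=-x/z$. Then the $(1,1)$- and $(2,2)$-entries become $0$, and the $(1,2)$-entry becomes $y+x^2/z=(yz+x^2)/z=0$ by the nilpotency relation. Hence $gA_2g^{-1}=zE_{21}$, giving the second normal form with $\al:=z$.

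\medskip

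The combined transformation from Steps 1 and 3 keeps $A_1=E_{12}$ fixed and sends $A_2$ to either $\al E_{12}$ or $\al E_{21}$, proving the lemma. The only real work is the coordinate computation in Step 3; the conceptual input is simply recognizing that the $(2,1)$-entry is a stabilizer invariant, which cleanly splits the two cases.
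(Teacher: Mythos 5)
Your proof is correct and follows essentially the same route as the paper: normalize $A_1$ to $E_{12}$, act by the stabilizer $\bigl(\begin{smallmatrix}a & b\\ 0 & a\end{smallmatrix}\bigr)$, and split on whether the (invariant) $(2,1)$-entry of $A_2$ vanishes. The only cosmetic difference is that you verify the vanishing of the $(1,2)$-entry directly from $x^2+yz=0$, whereas the paper deduces it from the nilpotency of the conjugate; these are the same fact.
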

\begin{proof} For each field $\FF$ we have that $A_1\sim E_{12}$. Therefore, without loss of generality we can assume that $A_1=E_{12}$. Denote $A_2=\matr{a_1}{a_2}{a_3}{-a_1}$ for some $a_1,a_2,a_3\in\FF$ and assume that $g=\matr{g_1}{g_2}{0}{g_1}$ lies in $\GL_2$. Then we have that $g\cdot A_1=A_1$ and
$$g\cdot A_2 = \matr{h_1}{h_2}{a_3}{-h_1},$$
for $h_1=a_1 + \frac{g_2}{g_1}a_3$ and $h_2=(a_2 g_1^2 - 2a_1 g_1 g_2 - a_3 g_2^2)/g_1^2$.

Assume that $a_3\neq0$. Then we take $g_1=1$ and $g_2=-\frac{a_1}{a_3}$ to obtain that $h_1=0$. Since $g\cdot A_2$ is nilpotent, we obtain $h_2=0$ and $g\cdot A_2=a_3 E_{21}$.

Assume that $a_3=0$. Since $A_2$ is nilpotent, we obtain $A_2=a_2 E_{12}$.
\end{proof}

For $\be,\ga,\de$ of $\FF$ denote 
$$D(\be,\ga,\de)=\matr{\be}{\ga}{\de}{-\be}.$$

\begin{remark}\label{remark_3}
Assume that $\un{A},\un{A}'\in \N_2^3$ satisfy $A_1=A'_1=E_{12}$, $A_2=\al E_{21}$, and $A'_2=\al' E_{21}$  for some $\al,\al'\in\FF^{\times}$. Assume that  $t_{ij}:=\tr(A_iA_j)-\tr(A'_iA'_j)=0$ for all $1\leq i<j\leq 3$ and $t_{123}:=\tr(A_1A_2A_3)-\tr(A'_1A'_2A'_3)=0$. Then $\un{A}=\un{A}'$. 
\end{remark}
\begin{proof} 
Denote  $A_3=D(\be,\ga,\de)$ and $A'_3=D(\be',\ga',\de')$ for some $\be,\ga,\de,\be',\ga',\de'$ from $\FF$.  Since $t_{12}=0$, we have $\al=\al'$. Since $t_{13}=0$, $t_{23}=0$, $t_{123}=0$, respectively, we obtain that $\de=\de'$, $\al \ga = \al \ga'$, $\al \be = \al \be'$, respectively.
\end{proof}

\begin{lemma}\label{lemma_3}
If $\un{A}\in\N_2^m$ has no zeros, then $\un{A}$ is similar to one and only one of the following elements:

\begin{enumerate}
\item[(a)] $(E_{12},\al_2 E_{12},\ldots,\al_m E_{12})$, where $\al_2,\ldots,\al_m\in \FF^{\times}$; 

\item[(b)] $(E_{12},\al_2 E_{12},\ldots,\al_r E_{12},\al_{r+1} E_{21}, D_1,\ldots,D_s)$, where $1\leq r\leq m-1$, $s=m-r-1$, $\al_2,\ldots,\al_{r+1}\in \FF^{\times}$, $D_i\in \N_2$ is non-zero for every $i$.
\end{enumerate}
\end{lemma}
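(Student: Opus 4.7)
The plan is to normalize $\un{A}$ step by step under $\GL_2$-conjugation, and then distinguish the resulting normal forms via invariants of the orbit. Since $A_1\in\N_2$ is nonzero, it is similar to $E_{12}$, so up to the $\GL_2$-action we may assume $A_1=E_{12}$. Let $r$ be the largest integer in $\{1,\ldots,m\}$ such that $A_k\in \FF E_{12}$ for every $1\leq k\leq r$. If $r=m$, then each $A_k$ is a nonzero multiple $\al_k E_{12}$ with $\al_k\in \FF^{\times}$ (and $\al_1=1$), which gives case~(a).

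If $r<m$, then $A_{r+1}$ is a nonzero nilpotent matrix not in $\FF E_{12}$. Applying Lemma~\ref{lemma_1} to the pair $(A_1,A_{r+1})$ rules out the first alternative of that lemma, so there exists $g\in\GL_2$ with $g\cdot(A_1,A_{r+1})=(E_{12},\al_{r+1} E_{21})$ for some $\al_{r+1}\in\FF^{\times}$. Since $g\cdot E_{12}=E_{12}$, conjugation by $g$ fixes every element of $\FF E_{12}$, because $g\cdot(\la E_{12})=\la(g\cdot E_{12})=\la E_{12}$. Hence $g$ simultaneously fixes $A_1$ and each $A_k$ for $2\leq k\leq r$, while sending $A_{r+1}$ to $\al_{r+1}E_{21}$. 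Relabeling $D_i=A_{r+1+i}$ for $1\leq i\leq s:=m-r-1$ yields case~(b).

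For uniqueness I would argue as follows. Cases~(a) and~(b) are disjoint, because in~(a) the matrices pairwise commute, while in~(b) $[E_{12},\al_{r+1}E_{21}]=\al_{r+1}(E_{11}-E_{22})\neq 0$. Within each case, the integer $r$ is an orbit invariant, being characterized as the largest index such that $A_1,\ldots,A_r$ all lie in $\FF A_1$, a $\GL_2$-invariant condition. With $r$ fixed, a short computation shows that the stabilizer of $E_{12}$ in $\GL_2$ consists of the matrices $\matr{a}{b}{0}{a}$ with $a\in \FF^{\times}$, all of which act trivially on $\FF E_{12}$; this yields uniqueness of the scalars $\al_k$ in case~(a). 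In case~(b) one further requires $g\cdot\al_{r+1}E_{21}=\al'_{r+1}E_{21}$, which forces $b=0$ and $\al_{r+1}=\al'_{r+1}$, so the relevant stabilizer reduces to scalar matrices $aE$, which act trivially on $M_2$; this settles the uniqueness of the scalars $\al_k$ and of the matrices $D_i$ in case~(b). The main obstacle is really the bookkeeping in Step~2, namely verifying that the similarity placing $A_{r+1}$ into canonical form does not disturb the already-normalized entries $A_1,\ldots,A_r$; the identity $g\cdot E_{12}=E_{12}$ makes this automatic.
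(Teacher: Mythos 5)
Your proof is correct, and the existence half runs along essentially the same lines as the paper's (both reduce to Lemma~\ref{lemma_1}; you organize the induction by fixing the maximal $r$ with $A_1,\ldots,A_r\in\FF E_{12}$ up front, while the paper iterates the pair-normalization). The uniqueness half, however, takes a genuinely different route. The paper separates type (a) from type (b) by the conjugation-invariant property ``every $A_i$ is a scalar multiple of $A_1$,'' and then distinguishes orbits within type (b) by evaluating the polynomial invariants $\tr(A_iA_j)$ and $\tr(A_iA_jA_k)$, invoking Remark~\ref{remark_3} to pin down the entries of the $D_i$. You instead use non-commutativity of $E_{12}$ and $E_{21}$ to separate the two types, observe that $r$ is read off invariantly, and then compute stabilizers directly: the stabilizer of $E_{12}$ is $\left\{\matr{a}{b}{0}{a} : a\in\FF^{\times}\right\}$, which acts trivially on $\FF E_{12}$ (settling case (a)), and the additional requirement $gE_{21}g^{-1}\in\FF E_{21}$ forces $b=0$, leaving only scalar matrices, which act trivially on all of $M_2$ (settling case (b)). Both arguments are valid in every characteristic. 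Your stabilizer computation is arguably the more economical and self-contained way to classify the orbits; the paper's trace-based argument has the advantage that it exhibits \emph{which polynomial invariants} separate the normal forms, and the auxiliary Remark~\ref{remark_3} is reused verbatim in the proof of Lemma~\ref{lemma_4}, which is what ultimately drives the separating-set theorems. So the paper's detour through traces is not wasted work in context, but as a proof of Lemma~\ref{lemma_3} alone your version is complete and clean.
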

\begin{proof} In case $m=1$ we have $A_1\sim E_{12}$ and the required is proven.

Assume $m\geq 2$. By Lemma~\ref{lemma_1} either $(A_1, A_2)\sim (E_{12}, \al_2 E_{12})$ or $(A_1,A_2)\sim (E_{12},\al_2 E_{21})$ for some $\al_2\in\FF^{\times}$. In the first case we have $A_1=A_2 / \al_2$ and applying Lemma~\ref{lemma_1} to the pair $(A_1,A_3)$ we can see that either $(A_1,A_2,A_3)\sim (E_{12},\al_2 E_{12},\al_3 E_{12})$ or $(A_1,A_2,A_3)\sim (E_{12},\al_2 E_{12},\al_3 E_{21})$ for some $\al_3\in\FF^{\times}$. Repeating this procedure we finally obtain that either case (a) or (b) holds.

To prove uniqueness consider some $\un{A},\un{A}'\in \N_2^m$ of type (a) or (b) and satisfying the condition  $\un{A}\sim \un{A}'$.   Note that $\un{A}$ has type (a) if and only if for every $2\leq i\leq m$ exists $\al_i\in\FF^{\times}$ such that $A_1=A_i/\al_i$. Since this property of $\un{A}$ is not changed by the action of $\GL_2$, then we have one of the following two cases. 

\medskip
\noindent{}{\bf (1)} $\un{A}$ and $\un{A}'$ have both type (a), i.e., 
$$\un{A}=(E_{12},\al_2 E_{12},\ldots,\al_m E_{12}) \;\; \text{ and } \;\;
\un{A}'=(E_{12},\al'_2 E_{12},\ldots,\al'_m E_{12}), $$
where $\al_2,\ldots,\al_m,\al'_2,\ldots,\al'_m\in \FF^{\times}$. Consider $g\in \GL_2$ such that $g\cdot \un{A}=\un{A}'$. Since $g\cdot A_i=A'_i$ for all $i$, we obtain that $g\cdot E_{12}=E_{12}$ and $\al_i = \al'_i$ for $2\leq i\leq m$. Therefore, $\un{A}=\un{A}'$. 

\medskip
\noindent{}{\bf (2)} $\un{A}$ and $\un{A}'$ have both type (b), i.e.,
$$
\begin{array}{ccc}
\un{A} &=&(E_{12},\; \al_2 E_{12},\ldots,\,\; \al_r E_{12},\; \al_{r+1} E_{21}, D_1,\ldots,D_s),\\
\un{A}' &=&(E_{12},\,\al'_2 E_{12},\ldots,\, \al'_{r'} E_{12},\al'_{r'+1} E_{21}, D'_1,\ldots,D'_{s'}),\\
\end{array}
$$
where $r,r'\geq 1$, $s,s'\geq0$, $\al_2,\ldots,\al_{r+1},\al'_2,\ldots,\al'_{r'+1}\in \FF^{\times}$, $D_i,D'_i\in \N_2$ is non-zero for every $i$.  By the definition of similarity we have that $t_{ij}:=\tr(A_iA_j)-\tr(A'_iA'_j)=0$ and $t_{ijk}:=\tr(A_iA_jA_k)-\tr(A'_iA'_jA'_k)=0$ for all $1\leq i,j,k\leq m$.
Since
$$\begin{array}{rcl}
\min\{2\leq v\leq m\,|\,  \tr(A_{v-1}A_{v})\neq 0\}&=&r+1,\\
\min\{2\leq v\leq m\,|\, \tr(A'_{v-1}A'_{v})\neq 0\}&=&r'+1,\\
\end{array}$$
we obtain that $r=r'$ and $s=s'$. Consider $1\leq i\leq s$. Applying Remark~\ref{remark_3} to the pair of triples $(A_1,A_{r+1},A_{r+i+1})$ and $(A'_1,A'_{r+1},A'_{r+i+1})$,  we obtain that $\al_{r+1}= \al'_{r+1}$ and $A_{r+i+1}=A'_{r+i+1}$. In case $r\geq2$ we use $t_{j,r+1}=0$ to obtain that $\al_j=\al'_j$ for all $2\leq j\leq r$. Therefore, $\un{A}=\un{A}'$.
\end{proof}

Lemma~\ref{lemma_3} implies the following description of $\GL_2$-orbits on $\N_2^m$.

\begin{theo}\label{theo_orb} 
Each $\GL_2$-orbit on $\N_2^m$ contains one and only one element of the following types: 

\begin{enumerate}
\item[(0)] $(0,\ldots,0)$; 

\item[(1)] $(\al_1 E_{12},\ldots,\al_m E_{12})$, where 
\begin{enumerate}
\item[$\bullet$] at least one element of the list $\al_1,\ldots,\al_m\in \FF$ is non-zero,

\item[$\bullet$] $\al_v=1$ for $v=\min\{1\leq i \leq m\,|\, \al_i\neq 0 \}$;
\end{enumerate}

\item[(2)] $(\al_1 E_{12},\ldots,\al_r E_{12},\al_{r+1} E_{21}, D_1,\ldots,D_s)$, where 
\begin{enumerate}
\item[$\bullet$] $1\leq r\leq m-1$, $s=m-r-1$, 

\item[$\bullet$] at least one element of the set $\{\al_1,\ldots,\al_{r}\}$ is non-zero and $\al_{r+1}\in\FF^{\times}$, 

\item[$\bullet$] $\al_v=1$ for $v=\min\{1\leq i \leq r\,|\, \al_i\neq 0 \}$, 

\item[$\bullet$]  $D_i\in \N_2$ for every $1\leq i\leq s$.
\end{enumerate}
\end{enumerate}
\end{theo}
\bigskip

For every integer $k$ denote 
$$S_{k}(q)=
\left\{
\begin{array}{rl}
q^k+q^{k-1}+\cdots+1 = \frac{q^{k+1}-1}{q-1}, & \text{ if } k\geq0 \\
0 , & \text{ if } k<0 \\
\end{array}
\right..
$$

\begin{cor}\label{cor_cor}
Assume $\FF=\FF_q$ and $m\geq1$. Then
\begin{enumerate}
\item[(a)] the number of $\GL_2$-orbits on $\N_2^m$ is equal to 
$$\k= 1 + \frac{(q^m-1)(q^{m-1}+q)}{q^2-1};$$ 

\item[(b)] $\k$ has the following presentation as a polynomial in $q$ with integer non-negative coefficients:
$$ \k =\k(q) = \left\{
\begin{array}{rl}
1 + S_{m-1}(q) + (q^{m-1} - 1) S_{\frac{m-2}{2}}(q^2), & \text{ if } m \text{ is even} \\
1 + S_{m-1}(q) + (q^{m} - 1) S_{\frac{m-3}{2}}(q^2), & \text{ if } m \text{ is odd} \\
\end{array}
\right.;$$

\item[(c)] the least possible number of elements for a separating set for $\mathcal{O}(\N_2^m)^{\GL_2}$ is 
$$\ga=
\left\{
\begin{array}{rl}
1, & \text{ if }m=1  \\
3, &\text{ if }m=q=2  \\
2m-2, &\text{ otherwise } \\
\end{array}
\right..$$
\end{enumerate}
\end{cor}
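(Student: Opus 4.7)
The plan is to enumerate the $\GL_2$-orbits on $\N_2^m$ via Theorem~\ref{theo_orb}, simplify the resulting expression, and then invoke Theorem~\ref{theo1} to translate orbit counts into separating-set sizes.

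For part~(a), I would count each of the three orbit families of Theorem~\ref{theo_orb} separately. Type~(0) contributes $1$. For type~(1), parametrising by the position $v\in\{1,\ldots,m\}$ of the first nonzero coordinate (which is forced to be $1$) and the $m-v$ free scalars in $\FF_q$, one obtains $\sum_{v=1}^{m} q^{m-v} = S_{m-1}(q)$ tuples. For type~(2) with fixed $r\in\{1,\ldots,m-1\}$, the same first-nonzero argument gives $(q^r-1)/(q-1)$ choices for $(\al_1,\ldots,\al_r)$, the coefficient $\al_{r+1}\in\FF^{\times}$ contributes $q-1$, and each $D_i\in\N_2(\FF_q)$ contributes $|\N_2(\FF_q)|=q^2$ choices, the latter following from a direct count of matrices $D(a,b,c)$ with $a^2+bc=0$ (splitting into the cases $a=0$ and $a\neq 0$). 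Summing over $r$ and combining the three families gives $\k = 1 + S_{m-1}(q) + \sum_{r=1}^{m-1}(q^r-1)\,q^{2(m-r-1)}$, which collapses to the closed form after placing everything over the common denominator $q^2-1$ and recognising the numerator as $(q^m-1)(q^{m-1}+q)=q^{2m-1}+q^{m+1}-q^{m-1}-q$.

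Part~(b) is then a routine algebraic rewriting of the formula from~(a). Using $S_{m-1}(q)=(q^m-1)(q+1)/(q^2-1)$, together with $S_{(m-2)/2}(q^2)=(q^m-1)/(q^2-1)$ in the even case and $S_{(m-3)/2}(q^2)=(q^{m-1}-1)/(q^2-1)$ in the odd case, the numerator collapses in both parities to $(q^m-1)(q^{m-1}+q)$, matching part~(a). The merit of this alternative presentation is that all coefficients are then manifestly non-negative integers.

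For part~(c), since $\N_2^m$ is a $\GL_2$-invariant subset of $M_2^m$, Theorem~\ref{theo1} gives that the minimal size of a separating set for $\mathcal{O}(\N_2^m)^{\GL_2}$ equals $\lceil\log_q\k\rceil$. Plugging in the formula from~(a), one has $\k=2$ for $m=1$ and $\k=5$ for $m=q=2$, yielding $\ga=1$ and $\ga=3$ respectively. In the remaining cases I would establish the double inequality $q^{2m-3}<\k\le q^{2m-2}$: the lower bound follows at once from the dominant contribution $q^{2m-1}/(q^2-1)>q^{2m-3}$, while the upper bound, after clearing $q^2-1$, reduces to
\[
q^{2m-2}(q^2-q-1)\ge q^{m-1}(q^2-1)+q^2-q-1,
\]
which holds for all $q\ge 3$ and for $q=2,\,m\ge 3$, and fails exactly at $(m,q)=(2,2)$. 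The main obstacle is this tight case analysis, since the upper bound becomes sharp precisely in the small-$q$, small-$m$ corner and one has to track lower-order terms carefully to isolate the exceptional value $\ga=3$.
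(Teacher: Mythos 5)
Your proposal is correct and follows essentially the same route as the paper: count the three orbit types from Theorem~\ref{theo_orb} (getting $1$, $S_{m-1}(q)$, and $\sum_{r=1}^{m-1}(q^r-1)q^{2m-2r-2}$), simplify, and apply Theorem~\ref{theo1} via the double inequality $q^{2m-3}<\k\le q^{2m-2}$ with the exceptional cases $m=1$ and $m=q=2$ treated separately. The only cosmetic difference is that you count $|\N_2(\FF_q)|=q^2$ directly instead of citing the Fine--Herstein formula $|\N_n|=q^{n(n-1)}$, which is a perfectly valid shortcut for $n=2$.
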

\begin{proof}
\noindent{\bf (a)} Denote by $\k_0(m)$, $\k_1(m)$, $\k_2(m)$, respectively, the number of $\GL_2$-orbits on $\N_2^m$ of type (0), (1), (2), respectively (see  Theorem~\ref{theo_orb}). Then $\k_0(m)=1$, $\k_1(m)=S_{m-1}(q)$, and 
$$\k_2(m) = \sum_{r=1}^{m-1}\k_1(r) (q-1) |\N_2|^{m-r-1} = \sum_{r=1}^{m-1} (q^r-1) q^{2m-2r-2},$$
since we apply the equality $|\N_n|=q^{n(n-1)}$, which was proven by Fine and Herstein~\cite{Fine_Herstein_1958}. Hence,
$$\k_2(m) = q^{2m-2} \left(\sum_{r=0}^{m-1} q^{-r} - \sum_{r=0}^{m-1} q^{-2r}\right)= 
q^{2m-2} \left(S_{m-1}(q^{-1}) - S_{m-1}(q^{-2})\right),$$
and we obtain
$$
\k_2(m) = \frac{(q^m-1)(q^{m-1}-1)}{q^2-1}.
$$
Therefore,
\begin{eq}\label{eq_k}
\k= \k_0(m)+\k_1(m)+\k_2(m) = 1 + S_{m-1}(q)+\frac{(q^m-1)(q^{m-1}-1)}{q^2-1},
\end{eq}%
and the claim of part (a) easily follows.
\medskip

\noindent{\bf (b)} If $m=1$, then $\k=2$ and part (b) holds. Assume $m\geq2$. Considering the case of even $m$ and the case of odd $m$, applying formula~(\ref{eq_k}), we prove the claim of part (b).

\medskip

\noindent{\bf (c)} By Theorem~\ref{theo1}, the least possible number of elements for a separating set for  $\mathcal{O}(\N_2^m)^{\GL_2}$ is $\ga=\lceil\log_q(\k)\rceil$. 

Assume $m=1$. Then $\k=2$ and $\ga=1$. 

Assume $m=q=2$. Then $\k=5$ and $\ga=3$.

Assume $m\geq 2$ and $m,q$ are not simultaneously equal to $2$. By part (a), the claim that $\ga=2m-2$ is equivalent to inequalities 
$$q^{2m-3}< 1 + \frac{(q^m-1)(q^{m-1}+q)}{q^2-1} \leq q^{2m-2}.$$%
The left inequality follows from
$$q^{2m-3}< \frac{(q^m-1)(q^{m-1}+q)}{q^2-1}.$$%
The right inequality is equivalent to
\begin{eq}\label{eq_k_c}
(q^{2m}-q^{2m-1}-q^{2m-2} - q^{m+1}) + (q^{m-1}- q^2) + q  + 1\geq 0.
\end{eq}%

In case $m=2$ and $q\geq3$, inequality~(\ref{eq_k_c}) is equivalent to $q^2(q^2-2q-2)+2q+1\geq0$, which holds since $q^2-2q-2\geq0$ for $q\geq3$.

In case $m\geq3$ we have $q^{2m}=q^{2m-2}q^2\geq q^{2m-2}q + q^{2m-2} + q^{2m-2}\geq q^{2m-1}+q^{2m-2} + q^{m+1}$, since $q^2\geq q+2$ for $q\geq2$; inequality~(\ref{eq_k_c}) follows. 
\end{proof}

Corollary~\ref{cor_cor} implies the following remark:

\begin{remark} Assume $\FF=\FF_q$. Then
\begin{enumerate}
\item[$\bullet$] for $m=1$ we have $\k=2$; 

\item[$\bullet$] for $m=2$ we have $\k=2q+1$; 

\item[$\bullet$] for $m=3$ we have $\k=q^3 + q^2 + q + 1$; 

\item[$\bullet$] for $m=4$ we have $\k=q^5 + 2q^3 + q + 1$; 

\item[$\bullet$] for $m=5$ we have $\k= q^7 + q^5 + q^4 + q^3 + q + 1$. 
\end{enumerate}
\end{remark}
\medskip

Corollary~\ref{cor_cor} implies that the following conjecture holds for $n=2$ and $m\geq1$. 

\begin{conj}\label{conj}
Assume $\FF=\FF_q$. If we fix $n\geq2$ and $m\geq1$, then the number of $\GL_n$-orbits on $\N_n^m$ is a polynomial in $q$ with integer coefficients. 
\end{conj}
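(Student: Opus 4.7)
The plan is to apply the Cauchy--Frobenius (Burnside) lemma to the action of $\GL_n(\FF_q)$ on the finite set $\N_n^m$, which yields
\begin{equation*}
\k(q) \;=\; \sum_{[g]} \frac{|C_{M_n}(g) \cap \N_n|^m}{|C_{\GL_n(\FF_q)}(g)|},
\end{equation*}
where $[g]$ ranges over the conjugacy classes of $\GL_n(\FF_q)$ and $C_{M_n}(g)$ denotes the centralizer of $g$ in the full matrix algebra $M_n$. The task then reduces to showing that the right-hand side is a polynomial in $q$ with integer coefficients.

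To organize the sum, I would use the classical parametrization of $\GL_n(\FF_q)$-conjugacy classes by partition-valued functions $\lambda\colon \Phi \to \mathcal{P}$, where $\Phi$ is the set of monic irreducible polynomials over $\FF_q$ distinct from $x$ and $\sum_f \deg(f)\,|\lambda(f)| = n$. Two classes have the same \emph{type} if the multisets $\{(\deg f, \lambda(f)) : \lambda(f) \neq \emptyset\}$ coincide. For a fixed type $\tau$, the number of conjugacy classes of type $\tau$ is a polynomial in $q$, expressible as a product of falling factorials in $N_d(q) = \frac{1}{d}\sum_{e \mid d} \mu(d/e) q^e$, and $|C_{\GL_n}(g)|$ likewise depends only on $\tau$ and is polynomial in $q$ by classical Hall theory.

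The main technical step is showing that $|C_{M_n}(g) \cap \N_n|$ depends only on the type and is polynomial in $q$. One decomposes $C_{M_n}(g) = \bigoplus_{f \in \Phi} C_f(g)$ into primary blocks, each an explicit associative algebra over $\FF_{q^{\deg f}}$ determined by $\lambda(f)$. An element of $C_{M_n}(g)$ is nilpotent if and only if each primary component is nilpotent in its block, so the count splits as a product; inside each block the number of nilpotent elements should be expressible as a polynomial in $q^{\deg f}$ by an extension of the Fine--Herstein formula $|\N_n| = q^{n(n-1)}$ invoked in the proof of Corollary~\ref{cor_cor}(a). Combining these ingredients yields $\k(q) \in \QQ[q]$.

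The main obstacle is upgrading this to integer coefficients. The Burnside formula produces only a rational polynomial, and indeed $N_d(q)$ and $\binom{N_d(q)}{k}$ have rational coefficients in general; moreover, integer-valuedness at prime powers does not imply integer coefficients, as the polynomial $q(q-1)/2$ illustrates. Overcoming this likely requires a direct combinatorial classification of $\GL_n$-orbits on $\N_n^m$ generalizing Theorem~\ref{theo_orb}, partitioning the orbit set into pieces whose sizes are manifestly integer polynomials in $q$ (as realized for $n=2$ via the polynomials $S_k(q) = 1 + q + \cdots + q^k$ appearing in Corollary~\ref{cor_cor}(b)). Producing such a normal form for arbitrary $n$ and $m$ appears to demand substantially new ideas and is the principal reason the statement remains a conjecture.
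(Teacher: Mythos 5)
The statement you are addressing is a conjecture: the paper does not prove it, and only verifies it for $n=2$ by the explicit orbit classification of Theorem~\ref{theo_orb} and the resulting closed formula of Corollary~\ref{cor_cor}. Your proposal therefore cannot be compared against a proof in the paper; what it can be compared against is the state of the art the paper itself records, namely Hua's theorem~\cite{Hua_2021} that the number of $\GL_n$-orbits on $\N_n^m$ is a polynomial in $q$ with \emph{rational} coefficients. Your Burnside-lemma computation, organized by conjugacy-class types and primary decomposition of centralizers, is essentially a reconstruction of that argument: the type-counting functions $N_d(q)=\frac{1}{d}\sum_{e\mid d}\mu(d/e)q^e$ and the falling factorials in them are exactly where the denominators enter, and nothing in your outline removes them. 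So the proposal, carried through, lands you at $\k(q)\in\QQ[q]$, which is already known and is strictly weaker than the conjectured statement.

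You correctly identify the gap yourself: integrality of the coefficients is the entire open content of the conjecture, and your closing remark that one "likely requires a direct combinatorial classification\dots partitioning the orbit set into pieces whose sizes are manifestly integer polynomials" is a restatement of the problem rather than a step toward solving it. (Your observation that integer-valuedness at prime powers does not force integer coefficients is the right cautionary point; note though that integer-valuedness at \emph{all} integers only forces the polynomial to lie in the binomial-coefficient lattice, and here one only knows values at prime powers, so even that weaker conclusion is unavailable.) Two smaller technical points if you ever pursue the rational-coefficient part rigorously: the fixed-point set of $g$ acting on $\N_n^m$ is $(C_{M_n}(g)\cap\N_n)^m$, so you should justify that $|C_{M_n}(g)\cap\N_n|$ depends only on the type — this requires knowing the number of nilpotent elements in the centralizer algebra of each primary block, which is a nontrivial extension of Fine--Herstein (it is known, via counting nilpotent endomorphisms of finite modules over $\FF_{q^d}[[t]]$, but you should cite or prove it rather than assert it "should be expressible"). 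In summary: the proposal is an honest and mostly correct sketch of the rational-coefficient result, but it does not prove the conjecture, and the paper does not either.
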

\bigskip

It was proven by Hua~\cite{Hua_2021} that  the number of $\GL_n$-orbits on $\N_n^m$ is a polynomial in $q$ with rational coefficients. Hua also conjectured that the number of absolutely indecomposable $\GL_n$-orbits on $\N_n^m$ is a polynomial in $q$ with non-negative integer coefficients (see Conjecture 4.1 of~\cite{Hua_2021}). This conjecture was shown to be true for $m=2$ and $1\leq n\leq 6$.

\section{Separating set for $\mathcal{O}(\N_2^m)^{\GL_2}$}\label{section_main}

If $\FF=\FF_q$ and $A=\matr{a_1}{a_2}{a_3}{a_4}$ lies in $\N_2$, then we have that $\zeta(A)=a_1^{q-1} - a_2^{q-1} - a_3^{q-1} + 1$,  since $a_1^2 =-a_2a_3$. In particular, 
\begin{eq}\label{eq_zeta2}
\zeta(Y_i)=y_{11}\!(i)^{q-1} - y_{12}\!(i)^{q-1} - y_{21}\!(i)^{q-1} + 1.
\end{eq}
Similarly, we can see that

\begin{eq}\label{eq_eta2}
\eta_{\al}(Y_i,Y_j)=\prod\limits_{u,v\in\{1,2\}}((\al\, y_{uv}(i)-y_{uv}(j))^{q-1}-1).
\end{eq}

%

\begin{remark}\label{remark_basic}
Let $H$ be one of the following sets: $S_{2,m}$, $S_{2,m}^{(2)}$, $H_{2,m}$, $H_{2,m}^{(2)}$. Assume that $\un{A},\un{B}\in\N_2^m$ are not separated by $H$. Then  
 \begin{enumerate}
 \item[(a)]  for any $\si\in\Sym_m$  we have that $\un{A}_{\si},\un{B}_{\si}$ are not separated by $H$;

 \item[(b)] for any $\un{A}',\un{B}'\in \N_n^m$ with $\un{A}\sim\un{A}'$ and $\un{B}\sim\un{B}'$ we have that $\un{A}',\un{B}'$ are not separated by $H$.
\end{enumerate}
\end{remark}
\begin{proof} Consider some $A_1,A_2,A_3$ from $\N_2$. It is easy to see that $\eta_{\al}(A_1,A_2)=\eta_{\al^{-1}}(A_2,A_1)$ for all $\al\in\FF^{\times}$. Moreover, the Cayley–-Hamilton theorem implies that 
\begin{eq}\label{eq_trABC}
\tr(A_1A_2A_3)=-\tr(A_1A_3A_2).
\end{eq}
Thus part (a) is proven. Part (b) is trivial. 
\end{proof}

\begin{lemma}\label{lemma_4} Assume that $\un{A},\un{B}\in\N_2^m$ are not separated by $S_{2,m}$. Then there exists a permutation $\si\in\Sym_m$ such that one of the next cases holds:
\begin{enumerate}
\item[(a)] 
$$\begin{array}{ccl}
\un{A}_{\si} & \sim & (0^{(k+l)},\al_1 E_{12},\ldots,\al_{r+s} E_{12}),\\ \un{B}_{\si} & \sim & (0^{(k)},\be_1 E_{12},\ldots,\be_l E_{12}, 0^{(r)}, \be_{l+1} E_{12},\ldots,\be_{l+s} E_{12}),\\ 
\end{array}
$$
where $k,l,r,s\geq0$, $k+l+r+s=m$, $\al_1,\ldots,\al_{r+s},\be_1,\ldots,\be_{l+s}\in \FF^{\times}$; moreover, $\al_1=1$ in case $\un{A}\neq \un{0}$ and $\be_1=1$ in case $\un{B}\neq \un{0}$; 

\item[(b)] $\un{A}_{\si}\sim (0^{(k)}, E_{12},\al_2 E_{12}, \ldots, \al_r E_{12}, \al_{r+1} E_{21}, D_1,\ldots,D_s)$ and $\un{B}\sim\un{A}$, where  $r\geq 1$, $k,s\geq0$, $k+r+s+1=m$, $\al_2,\ldots,\al_{r+1}\in\FF^{\times}$, and $D_i\in\N_2$ is non-zero for every $i$.
\end{enumerate}
 \end{lemma}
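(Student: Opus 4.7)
The plan is to split on whether any pairwise trace $\tr(A_iA_j)$ is non-zero, which by Theorem~\ref{theo_orb} is exactly the criterion that distinguishes type-(2) orbits from type-(0)/(1) orbits. Since $S_{2,m}$ consists of traces and the two tuples agree on them, if all $\tr(A_iA_j)$ vanish then so do all $\tr(B_iB_j)$, placing both tuples in type (0) or (1); otherwise both must lie in type (2), because every type-(2) normal form contains a pair $\al E_{12},\be E_{21}$ whose product has non-zero trace.

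In case (a), I would apply Theorem~\ref{theo_orb} to conjugate $\un A$ and $\un B$ separately into tuples of the form $(\al_1 E_{12},\ldots,\al_m E_{12})$ and $(\be_1 E_{12},\ldots,\be_m E_{12})$, then pick $\si\in\Sym_m$ that groups positions according to the four combinations of vanishing of $\al_i$ and $\be_i$, in the prescribed order (both zero, only $\un A$ zero, only $\un B$ zero, both non-zero). A diagonal conjugation $\diag(\la,1)$ applied to each tuple normalizes the leading non-zero coefficient to~$1$, giving the form stated in (a).

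In case (b) the goal is $\un A\sim\un B$; once this is established, the explicit form of $\un A_\si$ follows from Theorem~\ref{theo_orb} by letting $\si$ move the zeros of $\un A$ to the front and order the non-zero matrices as in Lemma~\ref{lemma_3}(b). I would fix $i_0<j_0$ with $\tr(A_{i_0}A_{j_0})\neq 0$. Because the traces agree, $\tr(B_{i_0}B_{j_0})\neq 0$, so Lemma~\ref{lemma_1} lets me conjugate $\un A$ and $\un B$ independently to arrange $A_{i_0}=B_{i_0}=E_{12}$ and $A_{j_0}=B_{j_0}=cE_{21}$ for the common value $c=\tr(A_{i_0}A_{j_0})$. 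The critical step is to show the zero positions coincide: if $A_i=0$, then $\tr(B_iB_{i_0})=\tr(B_iB_{j_0})=0$ forces the $(2,1)$ and $(1,2)$ entries of $B_i$ to vanish, so $B_i$ is diagonal, trace-zero and nilpotent, hence $B_i=0$; the converse follows by symmetry.

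After deleting the aligned zeros I would apply Lemma~\ref{lemma_3} to $\wz(\un A)$ and $\wz(\un B)$, which are zero-free with matching pairwise and triple traces and both of Lemma~\ref{lemma_3} type (b). The uniqueness part of Lemma~\ref{lemma_3} reads off $r$, the $\al_j$'s and each $D_i$ from these traces via Remark~\ref{remark_3}, yielding equal normal forms; hence $\wz(\un A)\sim\wz(\un B)$, and extending the conjugator trivially on zero coordinates produces $\un A\sim\un B$. The main obstacle is the zero-alignment step: it hinges on the fact that once a crossed pair is pinned as $(E_{12},cE_{21})$, trace-orthogonality against it forces any remaining $B_i$ to sit in the diagonal complement, where the only trace-zero nilpotent matrix is zero.
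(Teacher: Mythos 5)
Your proposal is correct and follows essentially the same route as the paper: both reduce to the normal forms of Lemma~\ref{lemma_3} and identify them through the pairwise and triple traces via Remark~\ref{remark_3}, your pinning of a crossed pair $(E_{12},c\,E_{21})$ at common positions and aligning zeros by trace-orthogonality being only a bookkeeping variant of the paper's alignment via its minimum-index argument and formula~(\ref{eq_2b}). Note only that in the last step you invoke the \emph{argument} of the uniqueness part of Lemma~\ref{lemma_3} (which uses nothing but equality of the relevant traces) rather than its literal statement (which assumes similarity); the paper redoes that computation inline for exactly this reason.
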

\begin{proof} We use Remark~\ref{remark_basic} without reference to it. Applying Lemma~\ref{lemma_3} to $\un{A}$ together with $\Sym_m$-action on $\un{A}$ we can see that one of the next two cases holds.

\medskip
\noindent{}{\bf (1)}  $\un{A}_{\si}\sim (0^{(k)},\al_1 E_{12},\ldots,\al_{r} E_{12})$, where $k,r\geq0$, $k+r=m$, and $\al_1,\ldots,\al_r\in\FF^{\times}$ with $\al_1=1$ in case $\un{A}\neq \un{0}$. Applying Lemma~\ref{lemma_3} to $\un{B}_{\si}$ together with $\Sym_m$-action on $(\un{A}_{\si},\un{B}_{\si})$ we can see that  one of the next two cases holds:
\begin{enumerate}
\item[(1a)] there is $\tau\in \Sym_m$ such that $(\un{A}_{\tau},\un{B}_{\tau})$ satisfies case (a) maybe without condition that $\be_1=1$ in case $\un{B}\neq\un{0}$. In the latter case we act by $\matr{\be_1^{-1}}{0}{0}{1}\in\GL_2$ on $\un{B}_{\tau}$ to obtain case (a).  

\item[(2a)] $\un{A}_{\si}$ is the same as above and $\un{B}_{\si}\sim \un{C}$, where for some $i<j$ we have $C_i=E_{12}$ and $C_j=\al E_{21}$ for  $\al\in\FF^{\times}$. In this case we have $\tr(A_{\si(i)}A_{\si(j)})=\tr(C_iC_j)$; hence, $\al=0$, a contradiction. 
\end{enumerate}

\medskip
\noindent{}{\bf (2)}   $\un{A}_{\si}\sim (0^{(k)},\al_1 E_{12}, \ldots, \al_r E_{12}, \al_{r+1} E_{21}, D_1,\ldots,D_s)$, where  $r\geq 1$, $k,s\geq0$, $\al_1=1$, $\al_2,\ldots,\al_{r+1}\in\FF^{\times}$, and $D_i\in\N_2$ is non-zero for every $i$.  Applying Lemma~\ref{lemma_3} to $\un{B}_{\si}$  we can see that one of the next two cases holds:
\begin{enumerate}
\item[(2a)] $wz(\un{B}_{\si})\sim(\be_1 E_{12},\ldots,\be_l E_{12})$ for some $l\geq0$ and $\be_1,\ldots,\be_l\in\FF^{\times}$, where $\be_1=1$ in case $\un{B}\neq\un{0}$. Then $B_iB_j=0$ for all $1\leq i,j\leq m$. On the other hand, $\tr(A_{\si(k+r)}A_{\si(k+r+1)})=\al_r \al_{r+1}$ is non-zero; a contradiction. 

\item[(2b)] 
$wz(\un{B}_{\si})\sim (\be_1 E_{12},\ldots,\be_{r'} E_{12},\be_{r'+1} E_{21}, D'_1,\ldots,D'_{s'})$, where $r'\geq 1$, $s'\geq0$, $\be_1=1$, $\be_2,\ldots,\be_{r'+1}\in \FF^{\times}$, $D'_i\in \N_2$ is non-zero for every $1\leq i\leq s'$. Therefore, 
$$\un{B}_{\si}\sim(\ldots,\be_{r'}E_{12},0,\ldots,0,\be_{r'+1} E_{21}, \ldots),$$
\vspace{-0.5cm}
$$\qquad \; \widehat{i} \qquad \qquad\quad\;\; \widehat{j} $$
where $\be_{r'}E_{12}$ and $\be_{r'+1} E_{21}$, respectively, stands in position $i$ and $j$, respectively, for some $1\leq i<j\leq m$. Since
$$\begin{array}{rcl}
\min\{1\leq v\leq m\,|\,\exists\, 1\leq u <v \text{ such that } 
\tr(A_{\si(u)}A_{\si(v)})\neq 0\}&=&k+r+1,\\
\min\{1\leq v\leq m\,|\,\exists\,  1\leq u <v \text{ such that } 
\tr(B_{\si(u)}B_{\si(v)})\neq 0\}&=&j,\\
\end{array}
$$
the conditions of the lemma imply that $j=k+r+1$. For $1\leq u\leq k+r$ we have 
\begin{eq}\label{eq_2b}
\tr(B_{\si(u)}B_{\si(k+r+1)})=\tr(A_{\si(u)}A_{\si(k+r+1)})=
\left\{
\begin{array}{rl}
\al_{u-k} \al_{r+1}, & \text{if } u>k\\
0, & \text{if }  u\leq k\\
\end{array}
\right..
\end{eq}%
Therefore,  $\un{B}_{\si}\sim (0^{(k)},\be_1 E_{12},\ldots,\be_r E_{12},\be_{r+1} E_{21}, D''_1,\ldots,D''_s)$, where some of the matrices $D''_1,\ldots,D''_s\in\N_2$ can be zero. In particular, $r=r'$. Since $\al_1=\be_1=1$, equality~\Ref{eq_2b} implies that $\al_{r+1}=\be_{r+1}$; therefore, $\al_i=\be_i$ for all $2\leq i\leq r$. Applying Remark~\ref{remark_3} to the pair of triples $(A_{\si(k+r)},A_{\si(k+r+1)},A_{\si(k+r+1+u)})$ and $(B_{\si(k+r)},B_{\si(k+r+1)},B_{\si(k+r+1+u)})$, where $1\leq u\leq s$, we obtain that $D_u=D'_u$. Therefore, $\un{A}_{\si}\sim 
\un{B}_{\si}$ and case (b) of the formulation of this lemma holds. 
\end{enumerate}
\end{proof}

\begin{remark}\label{remark_4.3}
For $\un{A}\in\N_2^2$ we have 
$$
\eta_{1}(A_1,A_2) = \left\{
\begin{array}{cl}
1 - \sum\limits_{\al\in \FF\,\backslash \{0,1\}} \eta_{\al}(A_1,A_2), & 
\text{if } A_1\neq0,\; A_2\neq0,\; \tr(A_1A_2)=0 \\
1  & \text{if } A_1=A_2=0 \\
0  & \text{if } A_1\neq 0, A_2=0 \;\text{ or }\; A_1= 0, A_2\neq 0  \\
0  & \text{if } \tr(A_1A_2)\neq 0 \\
\end{array}
\right..
$$
\end{remark}
\begin{proof} Since $\eta_{\al}(A_1,A_2)$ and $\tr(A_1A_2)$ are constants on the $\GL_2$-orbits on $\N_2^2$, by Lemma~\ref{lemma_1} we can assume that on  $\un{A}$ lies in the following list: $(0,0)$, $(0,E_{12})$, $(E_{12},0)$, $(E_{12},\ga E_{12})$, $(E_{12},\ga E_{21})$, where $\ga\in\FF^{\times}$. The required follows from case by case consideration. 
\end{proof}

\begin{lemma}\label{lemma_4.4}
Assume that $\Char{\FF}=2$ and $\un{A},\un{B}\in\N_2^3$ are not separated by $S_3^{(2)}=\{\tr(Y_1Y_2),\, \tr(Y_1Y_3),\, \tr(Y_2Y_3)\}$. Then $\tr(A_1 A_2 A_3)=\tr(B_1 B_2 B_3)$.
\end{lemma}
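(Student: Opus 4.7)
The plan is to exploit a special identity that holds in characteristic two. Since $\tr(X)=0$ for every $X\in\N_2$, in this characteristic any such $X$ has the shape $\matr{x_1}{x_2}{x_3}{x_1}$ with $x_1^2=x_2x_3$. A direct expansion in these coordinates yields $\tr(YZ)=y_2z_3+y_3z_2$ and, after collecting terms, the identity
\[
\tr(ABC) \;=\; A_{11}\tr(BC) \;+\; B_{11}\tr(AC) \;+\; C_{11}\tr(AB), \qquad A,B,C\in\N_2,
\]
valid in characteristic two (both sides expand to the same six cubic monomials once the terms with coefficient two are dropped, the only such term being $2\,A_{11}B_{11}C_{11}$).

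With this identity in hand I would split into two cases. If $\tr(A_iA_j)=0$ for all $1\leq i<j\leq 3$, then by hypothesis $\tr(B_iB_j)=0$ as well; applying the identity to both triples gives $\tr(A_1A_2A_3)=0=\tr(B_1B_2B_3)$ at once.

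Otherwise some pairwise trace is non-zero, and by Remark~\ref{remark_basic}(a) I may reorder so that $\al:=\tr(A_1A_2)\neq 0$; the hypothesis forces $\tr(B_1B_2)=\al$ as well. Lemma~\ref{lemma_1} then implies both $(A_1,A_2)$ and $(B_1,B_2)$ are similar to $(E_{12},\al E_{21})$ (the alternative $(E_{12},\al' E_{12})$ is excluded because it gives trace zero), and Remark~\ref{remark_basic}(b) lets me conjugate $\un{A}$ and $\un{B}$ independently to assume $A_1=B_1=E_{12}$ and $A_2=B_2=\al E_{21}$. Writing $A_3=\matr{d}{e}{f}{d}$ and $B_3=\matr{d'}{e'}{f'}{d'}$ with $d^2=ef$ and $d'^2=e'f'$, the remaining hypotheses $\tr(A_1A_3)=\tr(B_1B_3)$ and $\tr(A_2A_3)=\tr(B_2B_3)$ yield $f=f'$ and $\al e=\al e'$, hence $e=e'$. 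Therefore $d^2=ef=e'f'=d'^2$, so $(d+d')^2=0$, which in characteristic two forces $d=d'$. A direct computation gives $\tr(A_1A_2A_3)=\al d=\al d'=\tr(B_1B_2B_3)$.

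The only technical point is verifying the cubic identity displayed above; once that is established, the dichotomy reduces to a short normal-form argument using Lemma~\ref{lemma_1}, and no further structural input is needed.
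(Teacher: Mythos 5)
Your proof is correct, and it takes a genuinely different route from the paper's. The paper applies the classification Lemma~\ref{lemma_3} to both $\un{A}$ and $\un{B}$ and runs a case analysis on which normal form (a), (b), (c) each triple takes, with characteristic two entering only in the (b)--(b$'$) case through $\det(D)=\det(D')=0$. You instead isolate the characteristic-two identity $\tr(ABC)=A_{11}\tr(BC)+B_{11}\tr(AC)+C_{11}\tr(AB)$ for trace-zero $2\times2$ matrices (which I have checked: both sides expand to the six monomials $a_1b_2c_3+a_1b_3c_2+a_2b_1c_3+a_3b_1c_2+a_2b_3c_1+a_3b_2c_1$, the discrepancy being exactly $2a_1b_1c_1$), which kills the case where all pairwise traces vanish in one stroke; the remaining case needs only Lemma~\ref{lemma_1} to normalize one pair to $(E_{12},\al E_{21})$, after which your computation $f=f'$, $e=e'$, $(d+d')^2=d^2+d'^2=ef+e'f'=0$ is the same determinant argument the paper uses. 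What your version buys is transparency about where $\Char\FF=2$ is actually used and independence from the heavier Lemma~\ref{lemma_3}; what the paper's version buys is uniformity with the surrounding arguments, which all run through that classification. One small point worth a sentence in a final write-up: when you reorder so that $\tr(A_1A_2)\neq0$, the quantity $\tr(A_1A_2A_3)$ is only permutation-invariant up to sign by~\Ref{eq_trABC}, which is harmless here precisely because $\Char\FF=2$.
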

\begin{proof} 
If $A_i=B_j=0$ for some $1\leq i,j\leq 3$, then $\tr(A_1 A_2 A_3)=\tr(B_1 B_2 B_3)=0$. Therefore, without loss of generality we can assume that $A_i$ is non-zero for all $i$.

\medskip
\noindent{\bf 1.} Assume that $B_j=0$ for some $j$. By formula~\Ref{eq_trABC}, without loss of generality we can assume that $B_1=0$. Applying Lemma~\ref{lemma_3} to $\un{A}$ we obtain that one of the following cases holds:
$$({\rm a})\; \un{A}\sim(E_{12},\al_2 E_{12}, \al_3 E_{12}),\qquad  
({\rm b})\;    \un{A}\sim(E_{12},\al_2 E_{21}, D),\qquad 
({\rm c})\;    \un{A}\sim(E_{12},\al_2 E_{12}, \al_3 E_{21}),$$
where $\al_2,\al_3\in\FF^{\times}$ and $D\in\N_2$ is non-zero. In cases (a) and (c) we have $\tr(A_1A_2A_3)=0$ and the required holds. In case (b) we have $\al_2=\tr(A_1A_2)=\tr(B_1B_2)=0$; a contradiction.

\medskip
\noindent{\bf 2.} Assume that $B_j$ is non-zero for all $j$.  Applying Lemma~\ref{lemma_3} to $\un{A}$  we obtain that one of the above cases (a), (b), (c) holds for $\un{A}$.  Applying Lemma~\ref{lemma_3} to $\un{B}$  we obtain that one of the following cases holds:
$$({\rm a'})\; \un{B}\sim(E_{12},\be_2 E_{12}, \be_3 E_{12}),\qquad  
({\rm b'})\;    \un{B}\sim(E_{12},\be_2 E_{21}, D'),\qquad 
({\rm c'})\;    \un{B}\sim(E_{12},\be_2 E_{12}, \be_3 E_{21}),$$
where $\be_2,\be_3\in\FF^{\times}$ and $D'\in\N_2$ is non-zero. Since $\tr(A_1 A_2)=\tr(B_1 B_2)$ and $\tr(A_1 A_3)=\tr(B_1 B_3)$, we have one of the following three cases:
\begin{enumerate}
\item[$\bullet$] cases (a) and (${\rm a'}$) hold. Then $\tr(A_1A_2A_3)=\tr(B_1B_2B_3)=0$. 

\item[$\bullet$] cases (b) and (${\rm b'}$) hold. Since $\tr(A_1A_2)=\tr(B_1B_2)$, we obtain that $\al_2=\be_2$. Denote $D=D(a_1,a_2,a_3)$ and  $D'=D(b_1,b_2,b_3)$. The equalities $\tr(A_1A_3)=\tr(B_1B_3)$ and $\tr(A_2A_3)=\tr(B_2B_3)$, respectively, imply that $a_3=b_3$ and $a_2=b_2$, respectively. It follows from equalities $\det(D)=\det(D')=0$ that $a_1=b_1$, since $\Char\FF=2$. Therefore, $\un{A}\sim\un{B}$ and the required follows.

\item[$\bullet$] cases (c) and (${\rm c'}$) hold. Then $\tr(A_1A_2A_3)=\tr(B_1B_2B_3)=0$.

\end{enumerate}
\end{proof}

\begin{theo}\label{theo_main_finite}
Assume that $\FF=\FF_q$ is finite. Then 
\begin{enumerate}
\item[$\bullet$] $H_{2,m}^{(2)}$, in case $\Char{\FF}=2$;

\item[$\bullet$] $H_{2,m}$, in case $\Char{\FF}>2$; 
\end{enumerate}
is a minimal separating set for the algebra of invariant polynomial functions on nilpotent matrices  $\mathcal{O}(\N_2^m)^{\GL_2}$ for all $m>0$. 
\end{theo}
\begin{proof} Denote by $H$ the set $H_{2,m}^{(2)}$ or $H_{2,m}$, respectively, in case $\Char{\FF}=2$ or $\Char{\FF}>2$, respectively. 
Assume that $\un{A},\un{B}\in \N_2^m$  are not separated by $H$. To prove that $H$ is separating it is enough to show that $\un{A}\sim\un{B}$. By Remarks~\ref{remark_basic}, \ref{remark_4.3} and Lemma~\ref{lemma_4.4} we obtain in both cases that  $\un{A},\un{B}$ are not separated by 
$$H'_{2,m}=H_{2,m}\cup \{\eta_{\al}(Y_i,Y_j), \; 1\leq i,j\leq m,\; \al\in\FF^{\times}\}.$$

Applying Lemma~\ref{lemma_4} together with the fact $A_i=0$ if and only if $B_i=0$ ($1\leq i
\leq m$), we obtain that one of the next two cases holds:
\begin{enumerate}
\item[$\bullet$] $\un{A}_{\si}  \sim  (0^{(k)}, E_{12},\al_{k+2} E_{12},\ldots,\al_m E_{12})$, $\un{B}_{\si}  \sim  (0^{(k)}, E_{12},\be_{k+2} E_{12},\ldots,\be_m E_{12})$, 
where $\si\in \Sym_m$, $0\leq k<m$, $\al_{k+2},\ldots,\al_m,\be_{k+2},\ldots,\be_m\in \FF^{\times}$; 

\item[$\bullet$] $\un{A}\sim\un{B}$.
\end{enumerate}
Assume that the first case holds. Since for every $k+2\leq i\leq m$ and every $\al\in\FF^{\times}$ we have
$$A_{\si(k+1)}-\al A_{\si(i)} = 0 \; \text{ if and only if } \; B_{\si(k+1)}-\al B_{\si(i)} = 0,$$
then $\al_i=\be_i$. Therefore, $\un{A}_{\si}\sim\un{B}_{\si}$ and $H$ is separating.

Claims 1--4 (see below) show that $H$ is a {\it minimal} separating set  for all $m>0$. 

\medskip
\noindent{}{\it Claim 1.} The set $H_{2,1}\backslash \{\zeta(Y_1)\}=\emptyset$ is not separating for $\mathcal{O}(\N_2)^{\GL_2}$.
\smallskip

To prove this claim it is enough to consider $\un{A}=(0)$ and $\un{B}=(E_{12})$.

\medskip
\noindent{}{\it Claim 2.} Given $\be\in \FF\,\backslash\{0,1\}$, the set  $H_{2,2}\backslash \{\eta_{\be}(Y_1, Y_2)\}$ is not separating for $\mathcal{O}(\N_2^2)^{\GL_2}$.
\smallskip

To prove this claim consider $\un{A}=(E_{12},E_{12})$ and $\un{B}=(E_{12},\be E_{12})$ from $\N_2^2$. Then $\tr(A_1A_2)=\tr(B_1B_2)=0$ and $\eta_{\al}(A_1,A_2)=\eta_{\al}(B_1,B_2)=0$ for all $\al\in \FF\,\backslash\{0,1,\be\}$, but $\eta_{\be}(A_1,A_2)\neq\eta_{\be}(B_1,B_2)$. 

\medskip
\noindent{}{\it Claim 3.}  The set  $H_{2,2}\backslash \{\tr(Y_1 Y_2)\}$ is not separating for $\mathcal{O}(\N_2^2)^{\GL_2}$.
\smallskip

To prove this claim consider $\un{A}=(E_{12},E_{12})$ and $\un{B}=(E_{12},E_{21})$ from $\N_2^2$. Then $\eta_{\al}(A_1,A_2) =\eta_{\al}(B_1,B_2) = 0$ for all $\al\in \FF\,\backslash\{0,1\}$, but $\tr(A_1 A_2)\neq \tr(B_1 B_2)$.

\medskip
\noindent{}{\it Claim 4.} Let $m=3$ and $\Char{\FF}\neq2$. Then $H_{2,3}\backslash \{\tr(Y_1 Y_2 Y_3)\}$ is not separating for $\mathcal{O}(\N_2^3)^{\GL_2}$.
\smallskip

To prove this claim we consider $\un{A}=(E_{12},E_{21}, A_3)$ and $\un{B}=(E_{12},E_{21},B_3)$ from $\N_2^3$, where 
$$A_3=\matr{1}{1}{-1}{-1} \text{ and } B_3=\matr{-1}{1}{-1}{1}.$$
Thus $\eta_{\al}(A_i,A_j) =\eta_{\al}(B_i,B_j) = 0$ for all $1\leq i<j\leq 3$ and $\al\in \FF\,\backslash\{0,1\}$. Moreover, $\tr(A_1 A_2)=\tr(B_1 B_2)=1$, $\tr(A_1 A_3)=\tr(B_1 B_3)=-1$, $\tr(A_2 A_3)=\tr(B_2 B_3)=1$.  On the other hand, $\tr(A_1 A_2 A_3)=1$ and $\tr(B_1 B_2 B_3)=-1$ are different.
\end{proof}

\section{The case of infinite field}\label{section_main_appl}

In this section we assume that $\FF$ is infinite. We say that a $G$-invariant subset $W\subset V$ is a $G$-invariant  {\it cone} if the conditions  $w\in W$ and $\al\in\FF^{\times}$ imply that $\al w\in W$. 

\begin{remark}\label{remark_cone_degree} If $W\subset V$ is a $G$-invariant cone, then the algebra $\mathcal{O}(W)^G$ has $\NN$-grading by degrees.
\end{remark}
\smallskip

Note that $\N_n^m$ is a {\it multi-cone}, i.e., if $\un{A}\in\N_n^m$ and $\al_1,\ldots,\al_m\in\FF^{\times}$, then $(\al_1 A_1,\ldots,\al_m A_m)\in\N_n^m$. Thus the following extension of Remark~\ref{remark_cone_degree} to the case of multidegree holds. 

\begin{remark}\label{remark_cone} The algebra $\mathcal{O}(\N_n^m)^{\GL_n}$ has $\NN^m$-grading by multidegrees, where the multidegree of  $f\in \mathcal{O}(\N_n^m)\simeq \FF[M_n^m] / I(\N_n^m)$ is defined as the minimal multidegree (with respect to some fixed lexicographical order) of a polynomial $h\in\FF[M_n^m]$  with $f=h+I(\N_n^m)$.
\end{remark}
\bigskip

\begin{lemma}\label{lemma_21}
If $f\in \mathcal{O}(\N_2^m)^{\GL_2}$ is $\NN^m$-homogeneous and $f\not\in \FF$, then $f(\al_1 E_{12},\ldots,\al_m E_{12})=0$ for all $\al_1,\ldots,\al_m\in \FF$.
\end{lemma}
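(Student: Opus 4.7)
The plan is to exploit the torus action of $\{g_t = \diag(t,t^{-1}) : t \in \FF^\times\} \subset \GL_2$. First I would observe that $g_t\cdot E_{12} = t^2 E_{12}$, so conjugation by $g_t$ sends the tuple $(\al_1 E_{12},\ldots,\al_m E_{12})$ to $(t^2\al_1 E_{12},\ldots,t^2\al_m E_{12})$. By $\GL_2$-invariance of $f$ this gives
\[
f(\al_1 E_{12},\ldots,\al_m E_{12}) = f(t^2\al_1 E_{12},\ldots,t^2\al_m E_{12}) \qquad \text{for every } t\in\FF^\times.
\]

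Next I would combine this with the $\NN^m$-homogeneity of $f$. Writing $(d_1,\ldots,d_m)$ for its multidegree and using that $\N_2^m$ is a multi-cone (so the coordinatewise scaling action descends from $\FF[M_2^m]$ to $\mathcal{O}(\N_2^m)$), the identity
\[
f(c_1 A_1,\ldots,c_m A_m) = c_1^{d_1}\cdots c_m^{d_m}\, f(A_1,\ldots,A_m)
\]
holds on $\N_2^m$; applying it with $c_1=\cdots=c_m=t^2$ yields $f(t^2\al_1 E_{12},\ldots,t^2\al_m E_{12}) = t^{2d}\, f(\al_1 E_{12},\ldots,\al_m E_{12})$, where $d=d_1+\cdots+d_m$. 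Combining the two displays,
\[
(t^{2d}-1)\, f(\al_1 E_{12},\ldots,\al_m E_{12}) = 0 \qquad \text{for all } t\in\FF^\times.
\]

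To finish I would note that $f\notin\FF$ forces $d\geq 1$, and the polynomial $t^{2d}-1\in\FF[t]$ has at most $2d$ roots; since $\FF$ is infinite, some $t\in\FF^\times$ satisfies $t^{2d}\neq 1$, which forces $f(\al_1 E_{12},\ldots,\al_m E_{12})=0$. I do not expect a real obstacle here: the only mild subtlety is the justification that the multihomogeneity identity descends from $\FF[M_2^m]$ to the quotient $\mathcal{O}(\N_2^m)\simeq\FF[M_2^m]/I(\N_2^m)$, but this is routine because $\N_2^m$ is closed under coordinatewise scaling, so a multihomogeneous polynomial representative of $f$ in $\FF[M_2^m]$ restricts directly to the claimed identity on $\N_2^m$.
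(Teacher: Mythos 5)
Your proof is correct and follows essentially the same route as the paper: both arguments exploit that conjugation by a diagonal matrix scales $E_{12}$ by a unit, combine this with $\NN^m$-homogeneity to get a relation of the form $(\text{scalar}^{|\un{\de}|}-1)\cdot f(\al_1E_{12},\ldots,\al_mE_{12})=0$, and conclude from the infinitude of $\FF$. The only cosmetic differences are that the paper conjugates by $\diag(\ga,1)$ and first reduces the value to the coefficient $\be$ of the monomial $y_{12}(1)^{\de_1}\cdots y_{12}(m)^{\de_m}$, whereas you conjugate by $\diag(t,t^{-1})$ and argue directly on function values; both are equally valid.
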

\begin{proof} Denote the multidegree of $f$ by $\un{\de}=(\de_1,\ldots,\de_m)$, where $\de_!+\cdots+\de_m>0$. Then 
$$f=\be\, y_{12}(1)^{\de_1}\cdots y_{12}(m)^{\de_m} + \sum_{h\in \Omega}\be_h h,$$
for some $\be,\be_h\in\FF$, where $\Omega$ is the set of all monomials in $\{y_{ij}(k)\,|\,i,j\in\{1,2\},\, 1\leq k\leq m\}$ different from $y_{12}(1)^{d_1}\cdots y_{12}(m)^{d_m}$ for all $d_1,\ldots,d_m\geq0$.

For $g=\matr{\ga}{0}{0}{1}\in \GL_2$ with $\ga\in\FF^{\times}$ we have
$$ \begin{array}{ccccc}
\be & = &f(E_{12},\ldots,E_{12}) &= & f(g\cdot(E_{12},\ldots,E_{12})) \\
& = & f(\ga E_{12},\ldots,\ga E_{12}) &= & \be\,\ga^{\de_1 +\cdots + \de_m}\\
\end{array}$$
for all $\ga\in\FF^{\times}$. Hence $\be=0$, since $\FF$ is infinite. Therefore, $f(\al_1 E_{12},\ldots,\al_m E_{12})=\be=0$. 
\end{proof}

\begin{theo}\label{theo_main_infinite}
Assume that $\FF$ is infinite.  Then 
\begin{enumerate}
\item[$\bullet$] $S_{2,m}^{(2)}$, in case $\Char{\FF}=2$;

\item[$\bullet$] $S_{2,m}$, in case $\Char{\FF}>2$; 
\end{enumerate}
is a minimal separating set for the algebra of invariant polynomial functions on nilpotent matrices  $\mathcal{O}(\N_2^m)^{\GL_2}$ for all $m>0$. 
\end{theo}
\begin{proof} Denote by $S$ the set $S_{2,m}^{(2)}$ or $S_{2,m}$, respectively, in case $\Char{\FF}=2$ or $\Char{\FF}>2$, respectively. 
Assume that $\un{A},\un{B}\in \N_2^m$ are not separated by $S$ and $f\in \mathcal{O}(\N_2^m)^{\GL_2}$. To prove that $S$ is separating we have to show that $f(\un{A})=f(\un{B})$.  By Remark~\ref{remark_cone}, 
without loss of generality we can assume that $f$ is $\NN^m$-homogeneous of multidegree $\un{\de}$. Moreover, without loss of generality we can assume that $f(\un{A})\neq0$. By Lemmas~\ref{lemma_4}, \ref{lemma_4.4} one of the next two cases holds:
\begin{enumerate}
\item[$\bullet$] $$\begin{array}{ccl}
\un{A}_{\si} & \sim & (0^{(k+l)},\al_1 E_{12},\ldots,\al_{r+s} E_{12}),\\ \un{B}_{\si} & \sim & (0^{(k)},\be_1 E_{12},\ldots,\be_l E_{12}, 0^{(r)}, \be_{l+1} E_{12},\ldots,\be_{l+s} E_{12}),\\ 
\end{array}
$$
where $\si\in \Sym_m$, $k,l,r,s\geq0$, $\al_1,\ldots,\al_{r+s},\be_1,\ldots,\be_{l+s}\in \FF^{\times}$; moreover, $\al_1=1$ in case $\un{A}\neq 0$ and $\be_1=1$ in case $\un{B}\neq 0$; 

\item[$\bullet$] $\un{A}\sim\un{B}$.
\end{enumerate}
In the second case we have  $f(\un{A})=f(\un{B})$. Assume that the first case holds. 

Define $f_{\si^{-1}}$ as the result of substitutions $y_{ij}(t)\to y_{ij}(\si^{-1}(t))$ in $f$ for all $1\leq t\leq m$, $1\leq i,j\leq 2$. Then $f_{\si^{-1}}\in \mathcal{O}(\N_2^m)^{\GL_2}$ and $f_{\si^{-1}}(\un{C}_{\si})=f(\un{C})$ for all $\un{C}\in \N_2^m$. Therefore, to show that $f(\un{A})=f(\un{B})$ it is enough to show that $f_{\si^{-1}}(\un{A}_{\si})=f_{\si^{-1}}(\un{B}_{\si})$. Note that $\un{A}_{\si}$, $\un{B}_{\si}$ are not separated by $S$ by Remark~\ref{remark_basic}. Therefore, without loss of generality we can assume that $\si\in\Sym_m$ is the trivial permutation.

If $\de_i\neq0$ for some $1\leq i\leq k+l$, then $f(\un{A})=0$; a contradiction. Otherwise, $\de_1=\cdots=\de_{k+l}=0$. Hence $f$ is a polynomial in $\{y_{ij}(k+l+1),\ldots, y_{ij}(m)\,|\,i,j\in\{1,2\}\}$. Therefore,
$f(E_{12}^{(k+l)},\al_1 E_{12},\ldots,\al_{r+s} E_{12})=f(\un{A})$ is non-zero; a contradiction to Lemma~\ref{lemma_21}. Thus $S$ is separating.

Taking $\un{A},\un{B}$ from Claims 3, 4 from the proof of Theorem~\ref{theo_main_finite} we obtain that $S$ is a {\it minimal} separating set for all $m>0$. 
\end{proof}

\section{Corollaries}\label{section_cor}

As in Section~\ref{section_inv}, assume that $V$ is an $n$-dimensional vector space over $\FF$,  $G$ is a subgroup of $\GL(V)$, and $W\subset V$ is a $G$-invariant subset of $V$. We say that an $m_0$-tuple $\un{j}\in\NN^{m_0}$ is {\it $m$-admissible} if $1\leq j_1<\cdots < j_{m_0}\leq m$. For any $m$-admissible $\un{j}\in\NN^{m_0}$ and $f\in \mathcal{O}(W^{m_0})^{G}$ we define the invariant polynomial function $f^{(\un{j})}\in \mathcal{O}(W^{m})^{G}$ as the result of the following substitution of variables in $f$: 
\[ y_{1,i} \to y_{j_1,i},\, \ldots, \, y_{m_0,i} \to y_{j_{m_0},i} \quad \text{ (for all } 1 \leq i \leq n).\]
Given a set $S \subset \mathcal{O}(W^{m_0})^{G}$, we define its {\it expansion} $S^{[m]} \subset \mathcal{O}(W^m)^{G}$ by 
\begin{eq}\label{eq_expan}
S^{[m]} = \{ f^{(\un{j})} \,|\, f \in S \text{ and } \un{j}\in\NN^{m_0} \text{ is $m$-admissible}\}.
\end{eq}

\begin{remark} (Cf. \cite[Remark~1.3]{Domokos_2007})\label{remark_expan}
Assume that $S_{1}$ and $S_{2}$ are separating sets for $\mathcal{O}(V^{m_0})^{G}$ and assume that $m>m_0$. Then $S_1^{[m]}$ is separating for $\mathcal{O}(V^{m})^{G}$ if and only if $S_2^{[m]}$ is separating for $\mathcal{O}(V^{m})^{G}$.
\end{remark}
\medskip

Denote by $\sisep(\mathcal{O}(W),G)$ the minimal number $m_0$ such that the expansion of some separating set $S$ for $\mathcal{O}(W^{m_0})^{G}$ produces a separating set for $\mathcal{O}(W^{m})^{G}$ for all $m \geq m_0$. It immediately follows from the main result of~\cite{Lopatin_Reimers_1} that $\sisep(\mathcal{O}(V),\Sym_n) \leq \lfloor \frac{n}{2} \rfloor + 1$ over an arbitrary field $\FF$, where $\Sym_n$ acts on $V$ by the permutation of the coordinates. Moreover, $\sisep(\mathcal{O}(V),\Sym_n)\leq \lfloor  \log_2(n)\rfloor + 1$ in case $\FF=\FF_2$ (see Corollary 4.12 of~\cite{Kemper_Lopatin_Reimers_2}).

\begin{cor}\label{cor_main_finite_be}
Assume that $\FF=\FF_q$ is finite and $m\geq2$. Then 
\begin{enumerate}
\item[$\bullet$] $\besep(\mathcal{O}(\N_2^m)^{\GL_2})\leq 2$, in case $q=2$;

\item[$\bullet$] $\besep(\mathcal{O}(\N_2^m)^{\GL_2})\leq 4(q-1)$, in case $q>2$.
\end{enumerate}
\end{cor}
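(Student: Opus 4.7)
The plan is to invoke Theorem~\ref{theo_main_finite}, which exhibits an explicit separating set (namely $H_{2,m}^{(2)}$ when $\Char{\FF}=2$ and $H_{2,m}$ when $\Char{\FF}>2$), and to simply bound the degrees of its elements by reading off the formulas~(\ref{eq_zeta2}) and~(\ref{eq_eta2}). Since any separating set consisting of invariants of degree $\leq d$ is contained in the set of all invariant polynomial functions of degree $\leq d$, such a set is itself separating, and therefore $\besep(\mathcal{O}(\N_2^m)^{\GL_2})\leq d$.

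The degree bookkeeping is as follows. The traces $\tr(Y_iY_j)$ and $\tr(Y_iY_jY_k)$ clearly have degrees $2$ and $3$. By~(\ref{eq_zeta2}) the polynomial representative of $\zeta(Y_i)$ has degree $q-1$, and by~(\ref{eq_eta2}) the polynomial representative of $\eta_{\al}(Y_i,Y_j)$ is a product of four factors of degree $q-1$, hence of total degree $4(q-1)$.

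First I would treat $q=2$. Here $\FF\setminus\{0,1\}=\emptyset$, so $H_{2,m}^{(2)}$ consists only of the functions $\tr(Y_iY_j)$ and $\zeta(Y_i)$, of degrees $2$ and $q-1=1$ respectively. Thus the maximal degree occurring in the separating set is $2$, giving $\besep(\mathcal{O}(\N_2^m)^{\GL_2})\leq 2$.

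Next I would handle $q>2$. If $\Char{\FF}=2$, the relevant separating set is $H_{2,m}^{(2)}$, whose elements have degrees in $\{2,\, q-1,\, 4(q-1)\}$; the maximum is $4(q-1)$ (note $4(q-1)\geq 2$ since $q\geq 4$ in this sub-case, but in fact $4(q-1)\geq q-1$ and $4(q-1)\geq 2$ for all $q\geq 2$). If $\Char{\FF}>2$, the separating set is $H_{2,m}$, which additionally contains cubic traces of degree $3$; the maximum of $\{2,3,q-1,4(q-1)\}$ is still $4(q-1)$ since $q\geq 3$ gives $4(q-1)\geq 8>3$. In either sub-case one obtains $\besep(\mathcal{O}(\N_2^m)^{\GL_2})\leq 4(q-1)$. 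There is no genuine obstacle here beyond confirming that no non-empty piece of the separating set was overlooked; the argument is a direct computation combining Theorem~\ref{theo_main_finite} with the explicit representatives~(\ref{eq_zeta2}),~(\ref{eq_eta2}).
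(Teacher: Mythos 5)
Your proposal is correct and is exactly the argument the paper intends: the paper's proof of this corollary is literally the one-line instruction ``See Theorem~\ref{theo_main_finite} and formulas~\Ref{eq_zeta2}, \Ref{eq_eta2},'' and your write-up just carries out the degree bookkeeping that this reference leaves implicit. All the degree counts ($2$ and $3$ for the traces, $q-1$ for $\zeta$, $4(q-1)$ for $\eta_\al$, and the emptiness of the $\eta_\al$ family when $q=2$) are right, so nothing further is needed.
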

\begin{proof} See Theorem~\ref{theo_main_finite} and formulas~\Ref{eq_zeta2}, \Ref{eq_eta2}.
\end{proof}

\begin{cor}\label{cor_main_infinite_be}
Assume that $\FF$ is infinite and $m\geq2$. Then 

\begin{enumerate}
\item[$\bullet$] $\besep(\mathcal{O}(\N_2^m)^{\GL_2})\leq 2$, in case $\Char{\FF}=2$ or $m=2$;

\item[$\bullet$] $\besep(\mathcal{O}(\N_2^m)^{\GL_2})\leq 3$, in case $\Char{\FF}\neq 2$ and $m>2$.
\end{enumerate}
\end{cor}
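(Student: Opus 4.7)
The plan is to deduce this corollary as an immediate consequence of Theorem~\ref{theo_main_infinite}, together with a degree-count of the separating invariants exhibited there. By definition, $\besep(\mathcal{O}(\N_2^m)^{\GL_2})$ is the smallest integer $d$ for which the collection of all $\GL_2$-invariant polynomial functions of degree $\leq d$ is already a separating set. So it suffices to pin down the maximal degree of any element in a known separating set.

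First I would record the relevant degrees: every element of $S_{2,m}^{(2)}$ is of the form $\tr(Y_iY_j)$ and therefore has degree $2$, while the additional elements of $S_{2,m}\setminus S_{2,m}^{(2)}$ are of the form $\tr(Y_iY_jY_k)$ with $1\le i<j<k\le m$ and have degree $3$. In particular $S_{2,m}\setminus S_{2,m}^{(2)}=\emptyset$ whenever $m\le 2$, since no admissible triple $(i,j,k)$ exists.

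Next I would split into the three cases appearing in the statement. If $\Char\FF=2$, Theorem~\ref{theo_main_infinite} says that $S_{2,m}^{(2)}$ is separating, and by the previous paragraph all of its elements have degree $2$, so $\besep\le 2$. If $m=2$ (in any characteristic), then either we are in the preceding case, or $\Char\FF>2$ and Theorem~\ref{theo_main_infinite} yields the separating set $S_{2,2}$, which coincides with $S_{2,2}^{(2)}=\{\tr(Y_1Y_2)\}$ since there is no admissible triple; again all elements have degree $2$, so $\besep\le 2$. Finally, if $\Char\FF\ne 2$ and $m>2$, Theorem~\ref{theo_main_infinite} gives the separating set $S_{2,m}$, whose elements have degree at most $3$, so $\besep\le 3$.

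There is no genuine obstacle here; the entire content of the corollary is organizational, identifying which of the two separating sets from Theorem~\ref{theo_main_infinite} applies in each case and noting that its maximum degree is $2$ or $3$ as stated. The only very small care required is to observe the degeneration $S_{2,2}=S_{2,2}^{(2)}$ in the subcase $m=2$, $\Char\FF>2$, which accounts for the bound $\besep\le 2$ rather than $\besep\le 3$ there.
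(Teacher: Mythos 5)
Your proposal is correct and matches the paper's (very terse) proof, which simply cites Theorem~\ref{theo_main_infinite}; the degree count of $\tr(Y_iY_j)$ and $\tr(Y_iY_jY_k)$ and the observation that $S_{2,2}=S_{2,2}^{(2)}$ are exactly the intended justification. Nothing further is needed.
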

\begin{proof} See Theorem~\ref{theo_main_infinite}.
\end{proof}

\begin{cor}\label{cor_main_finite_si}
Assume that $\FF$ is an arbitrary field. Then 
$$\sisep(\mathcal{O}(\N_2),\GL_2)=\left\{
\begin{array}{cc}
2, & \text{ if } \Char{\FF}=2 \\
3, & \text{ if } \Char{\FF}\neq2 \\
\end{array}
\right..
$$
\end{cor}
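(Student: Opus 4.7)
The plan is to establish matching upper and lower bounds on $\sisep(\mathcal{O}(\N_2),\GL_2)$. The upper bounds come essentially for free from Theorems~\ref{theo_main_infinite} and~\ref{theo_main_finite}: each of the separating sets $S_{2,m}^{(2)}$, $S_{2,m}$, $H_{2,m}^{(2)}$, $H_{2,m}$ is visibly an expansion, in the sense of formula~\Ref{eq_expan}, of its restriction to $m_0=2$ (when $\Char\FF=2$) or $m_0=3$ (when $\Char\FF\neq 2$). Since these expansions are separating for every $m$, this gives $\sisep\leq 2$ in characteristic two and $\sisep\leq 3$ in the remaining cases.

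For the lower bounds, Remark~\ref{remark_expan} reduces matters to exhibiting, for one chosen separating set of the smaller $m_0$, a pair $\un{A},\un{B}\in\N_2^m$ in distinct $\GL_2$-orbits whose expanded invariants all agree. The guiding principle is that for any $f\in\mathcal{O}(\N_2^{m_0})^{\GL_2}$ and any $m$-admissible $\un{j}$, the value $f^{(\un{j})}(\un{A})=f(A_{j_1},\ldots,A_{j_{m_0}})$ depends only on the $\GL_2$-orbit of the sub-tuple $(A_{j_1},\ldots,A_{j_{m_0}})$. Hence if every length-$m_0$ sub-tuple of $\un{A}$ is $\GL_2$-similar to the corresponding sub-tuple of $\un{B}$, no expansion can distinguish them.

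For $\sisep\geq 2$ I would take $\un{A}=(E_{12},E_{12})$ and $\un{B}=(E_{12},E_{21})$ in $\N_2^2$: these lie in different orbits because $\tr(A_1A_2)=0\neq 1=\tr(B_1B_2)$ (valid in every characteristic), yet $A_i\sim B_i$ in $\N_2$ for $i=1,2$, so any $f\in\mathcal{O}(\N_2)^{\GL_2}$ satisfies $f^{(i)}(\un{A})=f^{(i)}(\un{B})$. For $\sisep\geq 3$ when $\Char\FF\neq 2$, I would reuse the pair $\un{A}=(E_{12},E_{21},D)$ and $\un{B}=(E_{12},E_{21},D')$ from Claim~4 in the proof of Theorem~\ref{theo_main_finite}, with $D=\matr{1}{1}{-1}{-1}$ and $D'=\matr{-1}{1}{-1}{1}$; these are separated only by $\tr(Y_1Y_2Y_3)$, taking values $1$ and $-1$ respectively.

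The main obstacle in the $\sisep\geq 3$ step is verifying that the three pairs $(A_i,A_j)$ and $(B_i,B_j)$, $1\leq i<j\leq 3$, are $\GL_2$-similar. For this I plan to invoke Lemma~\ref{lemma_1}: each such pair consists of two non-zero nilpotent matrices, so is conjugate to a normal form of shape $(E_{12},\al E_{12})$ or $(E_{12},\al E_{21})$ with $\al\in\FF^{\times}$, and this form is uniquely determined by whether the product-trace vanishes and by its value; these data match between $\un{A}$ and $\un{B}$ by the computations already recorded in Claim~4. Once the three pairwise similarities are in hand, the guiding principle above applies, and no expansion of any separating set for $\mathcal{O}(\N_2^2)^{\GL_2}$ can separate $\un{A}$ from $\un{B}$, completing the proof.
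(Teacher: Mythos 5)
Your proposal is correct and follows essentially the same route as the paper: the upper bounds come from observing that the separating sets of Theorems~\ref{theo_main_finite} and~\ref{theo_main_infinite} are expansions from $m_0=2$ (resp.\ $m_0=3$), and the lower bounds use exactly the pairs from Claims~3 and~4 of the proof of Theorem~\ref{theo_main_finite}. You merely make explicit a detail the paper leaves implicit, namely the verification via Lemma~\ref{lemma_1} that all length-$2$ sub-tuples of the Claim~4 pair are $\GL_2$-similar (which works because the relevant product-traces are non-zero, so the normal form $(E_{12},\al E_{21})$ with $\al=\tr$ is forced).
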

\begin{proof}
The upper bound on $\sisep(\mathcal{O}(\N_2),\GL_2)$ follows from Theorems~\ref{theo_main_finite},~\ref{theo_main_infinite}. To obtain the lower bound on $\sisep(\mathcal{O}(\N_2),\GL_2)$ we consider $\un{A},\un{B}$ from Claims 3 and 4 of the proof of Theorem~\ref{theo_main_finite}. 
\end{proof}

\begin{cor}\label{cor_minimal}
Assume that $\FF=\FF_q$. Then a minimal separating set
\begin{enumerate}
\item[$\bullet$] $H_{2,m}^{(2)}$, in case $\Char{\FF}=2$;

\item[$\bullet$] $H_{2,m}$, in case $\Char{\FF}>2$; 
\end{enumerate}
for  $\mathcal{O}(\N_2^m)^{\GL_2}$ contains the least possible number of elements for a separating set for  $\mathcal{O}(\N_2^m)^{\GL_2}$ if and only if $m=1$ or $m=q=2$.
\end{cor}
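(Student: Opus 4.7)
The plan is to verify this via a direct cardinality comparison, since Corollary~\ref{cor_cor}(c) already gives the least possible number of separating elements, and the sets $H_{2,m}^{(2)}$ and $H_{2,m}$ are minimal separating by Theorem~\ref{theo_main_finite}. Thus we only need to decide when the cardinality of the minimal separating set $H$ equals the minimum separating number $\ga$.

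First I would count $|H|$ explicitly. The trace part contributes $\binom{m}{2}$ elements in case $\Char\FF=2$ and $\binom{m}{2}+\binom{m}{3}$ in case $\Char\FF>2$; the collection $\{\zeta(Y_i)\}_{i=1}^m$ contributes $m$ further elements; and the $\eta_\al$-family, indexed by pairs $i<j$ and $\al\in\FF\setminus\{0,1\}$, contributes $(q-2)\binom{m}{2}$ elements. This gives
\begin{equation*}
|H_{2,m}^{(2)}|=(q-1)\binom{m}{2}+m,\qquad |H_{2,m}|=(q-1)\binom{m}{2}+\binom{m}{3}+m.
\end{equation*}
Next I would invoke Corollary~\ref{cor_cor}(c), which states that $\ga=1$ for $m=1$, $\ga=3$ for $m=q=2$, and $\ga=2m-2$ in all remaining cases.

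The two equality cases are then immediate. For $m=1$ both formulas give $|H|=1=\ga$. For $m=q=2$ we are forced into $\Char\FF=2$, the set $\FF\setminus\{0,1\}$ is empty so no $\eta_\al$ contributions appear, and $|H_{2,2}^{(2)}|=1+2=3=\ga$.

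The main task is then to check $|H|>2m-2$ in the remaining regime, i.e.\ $m\ge 2$ and $(m,q)\ne(2,2)$. I would split into cases on the parity of $q$. When $\Char\FF>2$, so $q\ge 3$, the bound $(q-1)\binom{m}{2}+m\ge 2\binom{m}{2}+m=m^{2}>2m-2$ is immediate for $m\ge 2$. When $\Char\FF=2$ and $q\ge 4$, the factor $q-1\ge 3$ already makes $(q-1)\binom{m}{2}+m\ge 3\binom{m}{2}+m>2m-2$ for $m\ge 2$. The only remaining subcase is $q=2$, $m\ge 3$, where we must verify $\binom{m}{2}+m>2m-2$; this amounts to $m^{2}-3m+4>0$, which holds for all real $m$ since the discriminant $9-16$ is negative. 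No step is a real obstacle: the whole argument is elementary arithmetic once the cardinality of $H$ and the formula for $\ga$ from Corollary~\ref{cor_cor} are in hand.
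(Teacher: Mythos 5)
Your proposal is correct and follows essentially the same route as the paper: compute $|H_{2,m}^{(2)}|=m+(q-1)\binom{m}{2}$ and $|H_{2,m}|=|H_{2,m}^{(2)}|+\binom{m}{3}$, compare with the value of $\ga$ from Corollary~\ref{cor_cor}(c), and check the inequality case by case. Your case split (by characteristic and $q$) differs only cosmetically from the paper's (by $m=2,q\ge3$ versus $m\ge3$), and all the arithmetic checks out.
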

\begin{proof}
The set from the formulation of corollary is a minimal separating set for  $\mathcal{O}(\N_2^m)^{\GL_2}$ by Theorem~\ref{theo_main_finite}. We have
$$|H_{2,m}^{(2)}|=m+\binom{m}{2}(q-1) \;\;\text{ and }\;\; |H_{2,m}|=|H_{2,m}^{(2)}|+\binom{m}{3},$$
where the binomial coefficient $\binom{m}{k}$ is zero in case $m<k$.

Assume $m=1$. Then Corollary~\ref{cor_cor} implies that $\ga=|H_{2,m}^{(2)}|=|H_{2,m}|=1$; i.e., the required is proven.

Assume $m=q=2$. Then $\Char \FF=2$ and Corollary~\ref{cor_cor} implies that $\ga=|H_{2,m}^{(2)}|=3$; i.e., the required is proven.

Assume $m=2$ and $q\geq 3$. Then Corollary~\ref{cor_cor} implies that $\ga=2$, but $|H_{2,m}|=|H_{2,m}^{(2)}|=q+1>\ga$.

Assume $m\geq3$. Then Corollary~\ref{cor_cor} implies that $\ga=2m-2$, but $|H_{2,m}|>|H_{2,m}^{(2)}|=m+\binom{m}{2}(q-1) >\ga$, since $\binom{m}{2}\geq m$.
\end{proof}
\medskip

\begin{example} Assume $\FF=\FF_2$ and $m=2$. By Theorem~\ref{theo_orb}, each $\GL_2$-orbit on $\N_2^2$ contains one and only one element from the following set:
$$(0,0),\; (E_{12},0),\; (E_{12},E_{12}),\; (0,E_{12}),\;(E_{12},E_{12}).$$
Corollary~\ref{cor_minimal} implies that the set 
$$H_{2,2}^{(2)}=\{\tr(Y_1Y_2),\, \zeta(Y_1),\, \zeta(Y_2)\}$$
is a separating set for $\mathcal{O}(\N_2^2)^{\GL_2}$ containing the least possible number of elements.
\end{example}


\end{document}